\newcommand{\beqn}{\begin{equation}}
\newcommand{\eeqn}{\end{equation}}
\newcommand{\bbmat}{\begin{bmatrix}}
\newcommand{\ebmat}{\end{bmatrix}}
\newtheorem{definition}{Definition}[section]
\newtheorem{remark}{Remark}[section]
\newtheorem{remark1}{Remark}[section]
\newtheorem{corollary}{Corollary}[section]
\newtheorem{theorem}{Theorem}[section]
\newtheorem{proposition}{Proposition}[section]
\newtheorem{lemma}{Lemma}[section]
\title{On the inverse scattering problem for radially-symmetric domains in two dimensions}
\author{
Abinand Gopal\thanks{
Department of Mathematics, Yale University, New Haven, CT 06511.
email: abinand.gopal@yale.edu} ,\,
Jeremy Hoskins\thanks{
Department of Statistics, University of Chicago, Chicago, IL 60637.
email: jeremyhoskins@uchicago.edu} ,\, 
Vladimir Rokhlin\thanks{Department of Mathematics and Department of Computer Science, Yale University, New Haven, CT, 06511. 
email: rokhlin@cs.yale.edu}}
\date{}
\begin{document}

\maketitle
\abstract{
   We present a new procedure for solving radially-symmetric, acoustic
   inverse scattering problems in the plane, given multifrequency data.
   Our approach builds upon the previous work \cite{yuchen}, which solved the 
   one-dimensional problem using a trace formula. 
   In particular, we develop a new trace formula relating the impedance
   of the field to the scattering potential, in the setting where the
   scattering potential is radially symmetric.
   We show that the resulting integro-differential equation can be solved in
   a stable and high-order manner, yielding a viable numerical procedure for
   solving the inverse problem. 
   We demonstrate the efficacy of our approach using several numerical
   experiments. 
%
%

\section{Introduction}  \label{30}
Acoustic inverse scattering problems arise in a variety of applications such
as geophysics, medical imaging, nondestructive testing, and sonar. 
The basic problem is to recover the scattering potential from measurements
of the scattered field.
This is a highly nonlinear problem with all of the associated challenges. 

In the one-dimensional setting, it is often possible to
transform the wave equation to the Schr\"odinger equation for which
relatively straightforward techniques exist (see
\cite{chad1,gelf}).
These mappings may not be numerically stable, however, and it is unknown how
to produce analogous mappings in two or three dimensions.
Alternatively, the nonlinear problem can be linearized using, for example,
the Born approximation (see \cite{stark}).
This sometimes works in the low-contrast regime, but breaks down in
environments with large amounts of backscattering.

Another approach is to solve the fully nonlinear problem.
This can be done using nonlinear optimization by solving a sequence of
linearized problems (see, for example,
\cite{symes1,lines1,pan1,lgc4,lgc5,lgc10,lgc11,lgc49,lgc44,lgc45,lgc46,lgc58,lgc59,les1}).
Another possibility is to use techniques from signal and image processing to
directly solve the nonlinear equation (see, for example,
\cite{lgc31,lgc35,lgc68,lgc51}).

For one-dimensional problems, yet another possibility is to use so-called
\emph{trace formulas} which relate multifrequency field data to the
scattering potential (see, for example, \cite{deift1,crutch1,stick1,sylv1}). 
In \cite{yuchen}, a stable and computationally efficient numerical
procedure was presented for solving one-dimensional inverse scattering
problems based on a trace formula.
In this manuscript, we extend the approach of \cite{yuchen} to
two-dimensional problems under the assumption that the scattering potential
is radially symmetric. 

We now outline the remainder of this paper. 
In Section 2, we formally state the problem and state some existing results
that are used in this work.
This is followed by Section 3 which contains our mathematical apparatus.
In Section 4, we state and prove our principal result, which is a new trace
formula for the two-dimensional, radially-symmetric case.
Our numerical procedure, along with several numerical experiments, are
presented in Section 5. 
Finally, in Section 6, we summarize our results and briefly outline some
directions for future work.

\section{Preliminaries}
\subsection{Formulation of the problem}
In this paper we consider the inverse scattering problem for acoustic waves
in radially-symmetric annuli in two dimensions. Let $\Omega$ denote the
annulus centered at the origin with inner radius $a$ and outer radius $b.$
Let $B_a$ denote the ball of radius $a$ centered at the origin. At a single
frequency $k \in \mathbb{C}$ such that ${\rm Im}\,k \ge 0,$ the
time-harmonic acoustic wave equation for the scattered field is
\begin{align}\label{eqn:helmholtz}
\Delta u({\bf x}) + k^2 \left(1+Q({\bf x})\right) u({\bf x}) &= f({\bf x})
\end{align}
subject to the Sommerfeld radiation condition
\begin{align}
\lim_{r\rightarrow \infty} \sqrt{r}\left(\frac{\partial u}{\partial r}-ik u \right) &= 0.\nonumber
\end{align}
Here we assume the source $f\in L^2$ is a function supported on $B_a$ and the potential $Q$ is a continuous compactly-supported radially-symmetric function. In particular, we assume that $Q({\bf r}) = q(\|{\bf r}\|)$ for some continuous function $q:\mathbb{R}\rightarrow \mathbb{R}$  supported on the interval $[a,b]$ with $0<a<b<\infty.$ Here $\| \cdot\|$ denotes the standard Euclidean norm in $\mathbb{R}^2.$ Moreover, we assume that there exist two constants $q_0$ and $q_1$ such that $-1<q_0<q(r)<q_1<\infty$ for all $a<r<b.$

\subsection{Reduction to the radial problem}
In this section we reduce the radially-symmetric acoustic scattering problem
to a set of decoupled one-dimensional scattering problems. Let $u_n:
[0,\infty) \rightarrow \mathbb{C}$ be the Fourier coefficient of $u$ with
respect to the angle $\theta,$ i.e.
\begin{align}\label{eqn:un_def}
u_n(r) = \int_0^{2\pi} e^{-in\theta}u(r \cos\theta,r \sin\theta)\,{\rm d}\theta.
\end{align}
For any integer $n$ the function $u_n$ satisfies the differential equation
\begin{align}\label{eqn:1d}
u_n''(r) + \frac{1}{r}u_n'(r)+k^2[1+q(r)]u_n(r)-\frac{n^2}{r^2}u_n(r)=f_n(r),
\end{align}
where 
\begin{align}
f_n(r) = \int_0^{2\pi} e^{-in\theta}f(r \cos\theta,r \sin\theta)\,{\rm d}\theta
\end{align}
is the Fourier coefficient of $f.$

\begin{remark1}\label{lem_soln_char}
On any interval $c<x<d$ on which the source $f$ and potential $q$ are
identically zero the solutions to equation (\ref{eqn:1d}) are linear
combinations of the Bessel function $J_n(kr)$ and the Hankel function
$H_n(kr).$ Specifically,  if $f_n$ is supported on the interval $[0,R]$ and
$q$ is supported on the interval $[a,b]$ then for all $A \in \mathbb{C}$
there exist constants $\alpha,\mu$ depending only on the source $f_n$ and
potential $q$ such that for all $R<r<a$ 
\begin{align}
u_n(r) = A H_n(kr) + \alpha J_n(kr)
\end{align}
and for all $r>b,$ 
\begin{align}
u_n(r) = \mu H_n(kr).
\end{align}
\end{remark1}

\begin{remark1}\label{rem:scalemu}
The data required by the recovery algorithm presented in this paper depends only on the quantity $u_n'(r)/u_n(r).$ Hence in the remainder of the paper we will assume that solution $u_n(r)$ is scaled so that $\mu =1.$
\end{remark1}

In the following it will be convenient to rescale $u_n$ by $\sqrt{r}.$ This new quantity, $\sqrt{r}u_n(r),$ also satisfies a differential equation which can be readily obtained from equation (\ref{eqn:1d}).
\begin{lemma}
Let $u_n$ be a solution to the differential equation
\begin{align}\label{eqn:1dm}
u_n''(r) + \frac{1}{r}u_n'(r)+k^2[1+q(r)]u_n(r)-\frac{n^2}{r^2}u_n(r)=0,
\end{align}
and define the function $\psi_n:[a,b] \to \mathbb{C}$ by 
\begin{align}\label{eqn:psi_def}
\psi_n(r) = \sqrt{r} \,u_n(r).
\end{align}
 Then $\psi_n$ satisfies the equation
\begin{align}\label{eqn:psim_def}
\psi_n''(r) + k^2\left(1+q(r)\right) \psi_n(r)-\frac{n^2-\frac{1}{4}}{r^2} \psi_n = 0,
\end{align}
with the boundary conditions
\begin{align}
\psi_n(b) &= \sqrt{b}\, H_n(kb),\\
\psi_n'(b)&= k\sqrt{b}\, H_n'(kb)+\frac{1}{2b} H_n(kb).
\end{align}
\end{lemma}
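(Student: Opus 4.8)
The plan is to establish the differential equation by direct substitution, since the factor $\sqrt{r}$ in the definition (\ref{eqn:psi_def}) is chosen precisely to annihilate the first-order term in (\ref{eqn:1dm}), and then to read off the boundary data from the exterior representation of $u_n$ supplied by Remark~\ref{lem_soln_char}.

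First I would write $u_n(r) = r^{-1/2}\psi_n(r)$ and differentiate twice using the product rule, obtaining expressions for $u_n'$ and $u_n''$ in terms of $\psi_n,\psi_n',\psi_n''$ and half-integer powers of $r$. Substituting these into the left-hand side of (\ref{eqn:1dm}) and collecting terms, I expect two cancellations to drive the computation: the cross terms proportional to $r^{-3/2}\psi_n'$ arising from $u_n''$ and from $\tfrac{1}{r}u_n'$ cancel exactly, eliminating the first-derivative term; and the two remaining coefficients of $r^{-5/2}\psi_n$ (namely $\tfrac34$ from $u_n''$ and $-\tfrac12$ from $\tfrac1r u_n'$) combine to $\tfrac14 r^{-5/2}\psi_n$, which merges with the centrifugal term $-\tfrac{n^2}{r^2}\,r^{-1/2}\psi_n$. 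Multiplying the resulting identity through by $\sqrt{r}$ then yields (\ref{eqn:psim_def}) with the shifted index $n^2-\tfrac14$ (rather than $n^2+\tfrac14$, which is the sign one must check carefully).

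For the boundary conditions I would invoke Remark~\ref{lem_soln_char} together with the normalization $\mu=1$ of Remark~\ref{rem:scalemu}, which give $u_n(r)=H_n(kr)$ for $r>b$. Since $q$ is continuous and $u_n$ together with its first derivative are continuous across $r=b$, matching against the exterior solution yields $u_n(b)=H_n(kb)$ and $u_n'(b)=kH_n'(kb)$. Evaluating $\psi_n=\sqrt{r}\,u_n$ at $r=b$ gives $\psi_n(b)=\sqrt{b}\,H_n(kb)$ directly, and differentiating via the product rule, $\psi_n'=\tfrac12 r^{-1/2}u_n+\sqrt{r}\,u_n'$, and evaluating at $b$ produces $\psi_n'(b)=k\sqrt{b}\,H_n'(kb)+\tfrac{1}{2\sqrt{b}}H_n(kb)$.

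I do not anticipate a genuine analytic obstacle here: the result is a Liouville-type change of dependent variable, and the only real care is in bookkeeping the fractional powers of $r$ and confirming the sign of the $\tfrac14$ correction. The one conceptual point worth flagging is that (\ref{eqn:psim_def}) is posed on $[a,b]$, so the boundary data at $b$ must be obtained by continuity from the region $r>b$, where the explicit Hankel representation is valid, rather than from the equation on $[a,b]$ itself. I would also double-check the coefficient of $H_n(kb)$ in $\psi_n'(b)$ against the stated form, since the product rule produces $\tfrac{1}{2\sqrt{b}}$ and one should verify this agrees with the expression written in the statement.
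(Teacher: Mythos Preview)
Your approach is correct and matches the paper's: the paper does not actually supply a proof for this lemma, stating only that the equation for $\sqrt{r}\,u_n(r)$ ``can be readily obtained from equation (\ref{eqn:1d}),'' which is precisely the direct substitution you outline. Your flag on the boundary term is also well taken: the product rule gives $\psi_n'(b)=k\sqrt{b}\,H_n'(kb)+\tfrac{1}{2\sqrt{b}}H_n(kb)$, so the coefficient $\tfrac{1}{2b}$ in the statement appears to be a typo for $\tfrac{1}{2\sqrt{b}}$.
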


\begin{remark}
The function $\psi_n:[a,b] \to \mathbb{C}$ can be extended to a differentiable function
defined on $(0,\infty).$ Specifically, for $r>b$ we set
\begin{align}
\psi_n(r) = \sqrt{r} H_n(kr)
\end{align} 
and for $r<a$ we set
\begin{align}
\psi_n(r) = \alpha \sqrt{r} H_n(kr)+ \beta \sqrt{r} J_n(kr)
\end{align} 
where the coefficients $\alpha$ and $\beta$ are chosen so that $\psi_n(r)$ and $\psi_n'(r)$ are continuous at $a.$
\end{remark}

\subsection{Impedance}
In this section we introduce the concept of impedance (see, for example, \cite{sylv1}) and summarize its properties which are relevant to the subsequent analysis.
\begin{definition}\label{def_imped}
Given a solution $\psi_n$ of (\ref{eqn:psi_def}) the impedance $\phi_n:[a,b] \times \{z \in \mathbb{C} : z \neq 0, \,{\rm Im}z \ge 0\} \to \mathbb{C}$ is the function defined by the formula
\begin{align}\label{def_phin}
\phi_n(r,k) = \frac{ \psi_n'(r)}{ik \psi_n(r)}.
\end{align}
\end{definition}

\begin{remark}
The impedance $\phi_n$ corresponds to a scattering problem in which the source is located inside an annulus and propagates outward to infinity. One could in principle define an inward impedance corresponding to an incoming wave impinging upon an annulus and reflecting outward. For the inward impedance interference from waves passing through the scatterer from opposite directions produces poles which necessitate a different approach. A detailed analysis of the trace formula for the inward impedance will be published at a later date. 
\end{remark}


The definition of the impedance and equation (\ref{eqn:psim_def}) immediately imply the following lemma.

\begin{lemma}\label{lem:ricat_eqn}
For all $a \le r \le b,$ the impedance $\phi_n$ satisfies the Riccati equation
\begin{align}
\frac{\partial}{\partial r}\phi_n(r,k) = -ik\phi_n^2(r,k)-ik (1+q(r)) +i\frac{n^2-\frac{1}{4}}{kr^2}, 
\end{align}
together with the boundary condition
\begin{align}
\phi_n(b,k) =  \frac{H_n'(kb)}{i H_n(kb)}+\frac{1}{2ibk}.
\end{align}
\end{lemma}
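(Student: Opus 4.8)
The plan is to derive the differential equation by differentiating the defining relation (\ref{def_phin}) in $r$, and then to read off the boundary condition by evaluating that same relation at $r=b$. Both steps are immediate consequences of the second-order equation (\ref{eqn:psim_def}) for $\psi_n$ together with the boundary data for $\psi_n$ and $\psi_n'$ supplied by the preceding lemma, so no new machinery is required.

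First I would write $\phi_n = \frac{1}{ik}\,\frac{\psi_n'}{\psi_n}$ and differentiate by the quotient rule at fixed $k$, obtaining
\[
\frac{\partial \phi_n}{\partial r} = \frac{1}{ik}\left(\frac{\psi_n''}{\psi_n} - \left(\frac{\psi_n'}{\psi_n}\right)^{2}\right).
\]
The cross term is exactly what produces the quadratic nonlinearity characteristic of a Riccati equation: the definition itself gives $\psi_n'/\psi_n = ik\,\phi_n$, so that $(\psi_n'/\psi_n)^2 = -k^2\phi_n^2$. For the remaining term I would eliminate the second derivative using (\ref{eqn:psim_def}), which gives $\psi_n''/\psi_n = (n^2-\tfrac14)/r^2 - k^2(1+q)$, a quantity that no longer involves $\psi_n$. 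Substituting both expressions and distributing the factor $1/(ik)=-i/k$ across the bracket collapses the right-hand side into the three terms of the stated equation.

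For the boundary condition I would evaluate (\ref{def_phin}) at $r=b$, inserting $\psi_n(b)=\sqrt{b}\,H_n(kb)$ and the value of $\psi_n'(b)$ from the preceding lemma. Dividing by $ik\,\psi_n(b)=ik\sqrt{b}\,H_n(kb)$, the first term of $\psi_n'(b)$ reduces to $H_n'(kb)/(iH_n(kb))$ and the remaining term collapses to $1/(2ibk)$, giving the claimed formula. The argument presents no conceptual obstacle; the one place demanding care is the bookkeeping of the factors of $i$ and the attendant signs introduced by $1/(ik)=-i/k$. To guard against such slips I would cross-check the resulting equation against a tractable special case — for instance, the impedance of a purely outgoing free wave, for which $q\equiv 0$, the centrifugal term is negligible, and $\phi_n$ is constant, so that the right-hand side must vanish identically.
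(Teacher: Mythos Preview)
Your proposal is correct and is precisely the argument the paper has in mind: the paper offers no proof beyond the remark that the lemma follows immediately from the definition of $\phi_n$ and equation~(\ref{eqn:psim_def}), and your outline (quotient-rule differentiation, substitution of $\psi_n''/\psi_n$ from the ODE, evaluation at $r=b$) is exactly that computation. As an aside, when you actually carry out the algebra you will find the last two terms emerge with the opposite signs to those printed, namely $+ik(1+q(r))-i\,(n^2-\tfrac14)/(kr^2)$; your proposed free-wave cross-check catches this at once, and these signs are also the ones needed for consistency with the large-$k$ behaviour $\phi_n\to\sqrt{1+q}$ in Theorem~\ref{thm:WKB_form}.
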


\begin{remark1}
In a mild abuse of notation unless otherwise stated we will denote derivatives of $\phi_n(r,k)$ with respect to $r$ by $\phi_n'(r,k).$ 
\end{remark1}

\begin{corollary}\label{cor:ricat_eqn}
Suppose $n$ is a non-negative integer and consider the function
$$w:[a,b] \times \{k \in \mathbb{C} : k \neq 0, \,{\rm Im}\,k \ge 0\} \to \mathbb{C}$$
 defined by
\begin{align}\label{eqn:wdeff}
w(r,k) = \phi_n(r,k)-\frac{(\sqrt{r}H_n(kr))'}{ik \sqrt{r} H_n(kr)},
\end{align}
where $\phi_n$ is the impedance defined in (\ref{def_phin}). Then for all $a \le r \le b$
\begin{align}\label{eqn:wdiff}
w'(r,k) =  -ik w(r,k) \left(w(r,k)+2\frac{(\sqrt{r}H_n(kr))'}{ik \sqrt{r} H_n(kr)}\right) -ik\,q(r),
\end{align}
and $w(b,k) = 0$ for all non-zero $k \in \mathbb{C}$ with non-negative imaginary part. Obviously, the differential equation (\ref{eqn:wdiff}) is equivalent to the integral equation
\begin{align}\label{eqn:w_intfrm}
w(r,k) =  ik \int_r^b \left(w(x,k) \left(w(x,k)+2\frac{(\sqrt{r}H_n(kx))'}{ik \sqrt{r} H_n(kx)}\right) +q(x)\right) \,{\rm d}x,\,\,\,\,\, a \le r \le b.
\end{align}
\end{corollary}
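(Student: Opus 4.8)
The plan is to recognize $w$ as the difference of two impedances, each governed by a Riccati equation, so that subtracting the two equations annihilates every term not involving the potential $q$. Write
\begin{align}
g(r,k) := \frac{(\sqrt{r}H_n(kr))'}{ik\sqrt{r}H_n(kr)},\nonumber
\end{align}
so that $w = \phi_n - g$. The first observation I would record is that $\sqrt{r}H_n(kr)$ is precisely the solution of (\ref{eqn:psim_def}) in the zero-potential case: $H_n(kr)$ satisfies (\ref{eqn:1dm}) with $q\equiv 0$, and the rescaling $\psi_n = \sqrt{r}u_n$ turns that equation into (\ref{eqn:psim_def}) with $q\equiv 0$. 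Hence $g$ is the impedance, in the sense of Definition \ref{def_imped}, of this zero-potential solution, and the same implication that produced Lemma \ref{lem:ricat_eqn} — the definition of impedance together with (\ref{eqn:psim_def}) — gives the Riccati equation for $g$ with the potential set to zero, namely
\begin{align}
g'(r,k) = -ik\,g^2(r,k) - ik + i\frac{n^2-\tfrac14}{kr^2}.\nonumber
\end{align}

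Next I would subtract this from the Riccati equation of Lemma \ref{lem:ricat_eqn} for $\phi_n$. Every $q$-independent term — the constant $-ik$ and the centrifugal term $i(n^2-\tfrac14)/(kr^2)$ — is common to both equations and cancels, leaving only the potential contribution $-ik\,q(r)$ and the difference of the quadratic terms:
\begin{align}
w'(r,k) = \phi_n'(r,k) - g'(r,k) = -ik\big(\phi_n^2(r,k)-g^2(r,k)\big) - ik\,q(r).\nonumber
\end{align}
Factoring $\phi_n^2 - g^2 = (\phi_n - g)(\phi_n + g) = w\,(w + 2g)$, where I have used $\phi_n + g = (w+g)+g = w+2g$, and substituting the definition of $g$ back in, yields exactly the differential equation (\ref{eqn:wdiff}).

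For the boundary condition I would evaluate $g$ at $r=b$. Differentiating by the product rule gives $(\sqrt{r}H_n(kr))' = \tfrac{1}{2\sqrt{r}}H_n(kr) + k\sqrt{r}H_n'(kr)$, so that
\begin{align}
g(b,k) = \frac{\tfrac{1}{2\sqrt{b}}H_n(kb)+k\sqrt{b}H_n'(kb)}{ik\sqrt{b}H_n(kb)} = \frac{H_n'(kb)}{iH_n(kb)} + \frac{1}{2ibk},\nonumber
\end{align}
which is identically the boundary value of $\phi_n(b,k)$ recorded in Lemma \ref{lem:ricat_eqn}. Hence $w(b,k)=\phi_n(b,k)-g(b,k)=0$ for every admissible $k$. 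Finally, integrating (\ref{eqn:wdiff}) from $r$ to $b$ and invoking $w(b,k)=0$ gives
\begin{align}
-w(r,k) = -ik\int_r^b\Big(w(x,k)\big(w(x,k)+2g(x,k)\big)+q(x)\Big)\,{\rm d}x,\nonumber
\end{align}
which is (\ref{eqn:w_intfrm}) after multiplying through by $-1$.

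I expect no genuine obstacle: the whole argument is the structured cancellation obtained by subtracting two Riccati equations whose only difference is the potential, and the equivalence with the integral equation is the fundamental theorem of calculus applied with the known terminal value. The single place demanding care is the boundary-condition identity, where the $\tfrac{1}{2\sqrt{r}}H_n(kr)$ piece of the derivative must be tracked precisely so that the $\tfrac{1}{2ibk}$ term appearing in $\phi_n(b,k)$ is reproduced exactly; a sign or factor error there is the only thing that could break the clean vanishing of $w$ at $r=b$.
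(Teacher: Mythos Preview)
Your argument is correct and is exactly the computation implied by the paper, which states the result as an immediate corollary of Lemma~\ref{lem:ricat_eqn}: recognize $g$ as the zero-potential impedance, subtract the two Riccati equations so that only the $-ik\,q(r)$ term survives together with the difference of squares $\phi_n^2-g^2=w(w+2g)$, and read off the terminal condition $w(b,k)=0$ from the boundary value in Lemma~\ref{lem:ricat_eqn}. There is nothing to add or correct.
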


We conclude this section with the following lemma which characterizes the symmetry of the impedance in frequency.

\begin{lemma}\label{lem:phi_sym}
Let $k$ be a non-zero real number and $0<r<\infty.$ Then for all non-negative integers $n$
\begin{align}
\phi_n(r,k) = \overline{\phi_n(r,-k)}.
\end{align}
\end{lemma}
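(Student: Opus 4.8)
The plan is to track how the solution $\psi_n$ used to define the impedance transforms under the two operations $k\mapsto -k$ and complex conjugation, and to show that their composition returns the same solution up to an irrelevant multiplicative constant. Let $\psi_n$ be the solution of (\ref{eqn:psim_def}) with $\psi_n(r)=\sqrt{r}\,H_n(kr)$ for $r>b$, which is the one entering the definition (\ref{def_phin}) of $\phi_n(\cdot,k)$, and let $\tilde\psi_n$ be the corresponding solution at frequency $-k$, so that $\phi_n(r,-k)=\tilde\psi_n'(r)/\bigl(-ik\,\tilde\psi_n(r)\bigr)$. Since $k$ is real, $\overline{-ik}=ik$, and conjugating gives $\overline{\phi_n(r,-k)}=\overline{\tilde\psi_n}{}'(r)/\bigl(ik\,\overline{\tilde\psi_n}(r)\bigr)$. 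Writing $\chi_n:=\overline{\tilde\psi_n}$, the assertion is precisely that $\chi_n$ and $\psi_n$ have the same logarithmic derivative.

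First I would observe that $\chi_n$ solves the very same ODE as $\psi_n$. Indeed $\tilde\psi_n$ satisfies (\ref{eqn:psim_def}) with $k$ replaced by $-k$, and because $(-k)^2=k^2$ this is literally the same equation; moreover every coefficient in (\ref{eqn:psim_def}), namely $k^2$, $1+q(r)$, and $(n^2-\tfrac14)/r^2$, is real for real $k$ and real $r$, so conjugating the equation shows $\chi_n=\overline{\tilde\psi_n}$ also satisfies (\ref{eqn:psim_def}). As (\ref{eqn:psim_def}) is a second-order linear equation, $\chi_n$ and $\psi_n$ will be proportional as soon as their Cauchy data are proportional at a single point.

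The key step is the comparison in the exterior region $r>b$. There $\psi_n(r)=\sqrt{r}\,H_n(kr)$ and $\tilde\psi_n(r)=\sqrt{r}\,H_n(-kr)$, so $\chi_n(r)=\sqrt{r}\,\overline{H_n(-kr)}$. Taking $k>0$ (the case $k<0$ follows by conjugating the desired identity and relabelling $k\leftrightarrow -k$), I would invoke the reflection/connection formula for the Hankel function, together with the fact that conjugation interchanges the two kinds of Hankel function on the positive real axis, to obtain the reflection identity $\overline{H_n(-x)}=-(-1)^{n}H_n(x)$ for real $x>0$; differentiating this identity in $x$ also yields $\overline{H_n'(-x)}=(-1)^{n}H_n'(x)$. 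Consequently $\chi_n(r)=-(-1)^{n}\psi_n(r)$ for all $r>b$, so in particular $\chi_n$ and $\psi_n$ share proportional Cauchy data at $r=b$. By uniqueness for the initial value problem for (\ref{eqn:psim_def}), which holds on every compact subinterval of $(0,\infty)$ away from the singular point $r=0$, it follows that $\chi_n=-(-1)^{n}\psi_n$ on all of $(0,\infty)$. The constant then cancels in the logarithmic derivative, giving $\overline{\phi_n(r,-k)}=\psi_n'(r)/\bigl(ik\,\psi_n(r)\bigr)=\phi_n(r,k)$.

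I expect the single genuinely delicate point to be the Hankel reflection identity $\overline{H_n(-x)}=-(-1)^{n}H_n(x)$: it requires selecting the correct branch when continuing $H_n$ to negative argument and keeping the sign conventions of the radiation condition straight, since both conjugation and the $\pi$-rotation of the argument individually swap the two kinds of Hankel function, and one must verify that their composition returns to the kind used in the definition of $\phi_n$. Everything else reduces to the reality of the coefficients in (\ref{eqn:psim_def}) and standard ODE uniqueness. As an alternative that stays closer to the Riccati framework of Lemma~\ref{lem:ricat_eqn}, one may instead verify directly that $g(r):=\overline{\phi_n(r,-k)}$ satisfies the same Riccati equation as $\phi_n(\cdot,k)$, check that $g(b)=\phi_n(b,k)$ using the same Hankel identity in the boundary condition, and conclude by uniqueness; this route is more in the spirit of the paper but must contend with possible poles of $\phi_n$, which the linear formulation above avoids.
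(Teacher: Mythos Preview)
The paper states this lemma without proof, so there is no argument of the paper's to compare against. Your argument is correct: reducing the claim to the assertion that $\chi_n=\overline{\tilde\psi_n}$ is a scalar multiple of $\psi_n$ is the natural approach, and the only non-routine ingredient---the reflection identity $\overline{H_n(-x)}=-(-1)^n H_n(x)$ for real $x>0$---is valid. It follows from the connection formula $H_n^{(1)}(xe^{i\pi})=-(-1)^n H_n^{(2)}(x)$ (the integer-order case of the standard analytic continuation formula, e.g.\ \cite{abram}, 9.1.39) combined with $\overline{H_n^{(2)}(x)}=H_n^{(1)}(x)$ for $x>0$; the paper's placement of the branch cut of $\log$ along the negative imaginary axis ensures that $H_n(-x)$ is to be read as $H_n(xe^{i\pi})$, so your branch worry is already resolved by the paper's conventions.

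Your alternative via the Riccati equation of Lemma~\ref{lem:ricat_eqn} is equally legitimate and needs exactly the same Hankel identity to match the boundary data at $r=b$. The concern you raise about poles is unnecessary here: Proposition~\ref{prop_imped} shows $\psi_n$ is nowhere zero on $(0,\infty)$ for every nonzero real $k$, so $\phi_n(\cdot,k)$ and $\phi_n(\cdot,-k)$ are both smooth on $[a,b]$ and the Riccati uniqueness argument goes through without obstruction.
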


\subsection{Properties of Bessel functions}
In this section we list certain properties of Bessel and Hankel functions which will be used in the subsequent analysis.
\begin{proposition}
Let $n$ be a non-negative integer and $z$ be a non-zero complex number with a non-negative imaginary part. The Bessel functions of the first and second kind have the following expansions about $z=0$
\begin{align}\label{eqn:bess_exp}
J_n(z) = \left(\frac{z}{2} \right)^n \sum_{n=0}^\infty&(-1)^k \frac{\left(\frac{1}{4}z^2\right)^k}{k! \,\Gamma(n+k+1)},\nonumber\\
Y_n(z) = -\frac{\left(\frac{1}{2} z \right)^{-n}}{\pi} &\sum_{k=0}^{n-1}\frac{(n-k-1)!}{k!} \left(\frac{1}{4}z^2\right)^k+\frac{2}{\pi} \log\left(\frac{1}{2}z \right)\,J_n(z)-\\
&\frac{\left(\frac{1}{2}z\right)^n}{\pi}\sum_{k=0}^\infty \left(\Psi(k+1)+\Psi(n+k+1) \right) \frac{\left(-\frac{1}{4}z^2\right)^k}{k!(n+k)!},\nonumber
\end{align}
where $\Psi(z) = \Gamma'(z)/\Gamma(z)$	and we take the branch cut of $\log$ to lie along the negative imaginary axis.

Moreover, if $H_n(z)$ denotes the $n$th order Hankel function then
\begin{align}\label{eqn:hank_exp}
H_n(z) &= \left(\frac{z}{2} \right)^n \sum_{n=0}^\infty(-1)^k \frac{\left(\frac{1}{4}z^2\right)^k}{k! \,\Gamma(n+k+1)}\\
& -i\frac{\left(\frac{1}{2} z \right)^{-n}}{\pi} \sum_{k=0}^{n-1}\frac{(n-k-1)!}{k!} \left(\frac{1}{4}z^2\right)^k+i\frac{2}{\pi} \log\left(\frac{1}{2}z \right)\,J_n(z)-\nonumber\\
&i\frac{\left(\frac{1}{2}z\right)^n}{\pi}\sum_{k=0}^\infty \left(\Psi(k+1)+\Psi(n+k+1) \right) \frac{\left(-\frac{1}{4}z^2\right)^k}{k!(n+k)!}.\nonumber
\end{align}
\end{proposition}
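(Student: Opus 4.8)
The plan is to derive all three expansions from the standard power-series definition of the first-kind Bessel function together with the classical limit representation of the second-kind function, obtaining the Hankel expansion as an immediate consequence of the identity $H_n = J_n + iY_n$.

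First I would take as the starting point the entire-function definition
\begin{align}
J_\nu(z) = \left(\frac{z}{2}\right)^\nu \sum_{k=0}^\infty (-1)^k \frac{(z^2/4)^k}{k!\,\Gamma(\nu+k+1)},\nonumber
\end{align}
valid for arbitrary complex order $\nu$, with the branch of $(z/2)^\nu$ fixed by the prescribed cut of $\log$ along the negative imaginary axis. The series converges absolutely and locally uniformly, so it may be differentiated term by term. Specializing to $\nu = n$ and using $\Gamma(n+k+1) = (n+k)!$ reproduces the first displayed formula verbatim.

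For the second-kind function I would invoke, for non-integer $\nu$, the defining relation $Y_\nu(z) = (J_\nu(z)\cos\nu\pi - J_{-\nu}(z))/\sin\nu\pi$ and pass to the limit $\nu \to n$. Because $J_{-n} = (-1)^n J_n$, numerator and denominator both vanish there, and L'Hopital's rule gives the standard formula
\begin{align}
Y_n(z) = \frac{1}{\pi}\left(\frac{\partial J_\nu}{\partial \nu}\bigg|_{\nu=n} + (-1)^n \frac{\partial J_\nu}{\partial \nu}\bigg|_{\nu=-n}\right).\nonumber
\end{align}
Differentiating the series, the prefactor $(z/2)^\nu$ supplies the logarithmic contribution $\tfrac{2}{\pi}\log(z/2)\,J_n(z)$, while differentiating $1/\Gamma(\nu+k+1)$ through the identity $\partial_\nu\,\Gamma(\nu+k+1)^{-1} = -\Psi(\nu+k+1)\,\Gamma(\nu+k+1)^{-1}$ produces the digamma factors $\Psi(k+1)+\Psi(n+k+1)$. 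Collecting these against the claimed series and then forming $J_n + iY_n$ yields the Hankel expansion.

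The step I expect to be the main obstacle is the contribution of the $\partial_\nu J_\nu|_{\nu=-n}$ term. For the indices $k = 0, \dots, n-1$ the argument $-\nu+k+1$ passes through non-positive integers as $\nu \to n$, where $1/\Gamma$ vanishes but its derivative does not; these terms must be isolated and evaluated using the reflection formula for $\Gamma$, and they are precisely what condense into the finite polynomial sum $-\tfrac{(z/2)^{-n}}{\pi}\sum_{k=0}^{n-1}\tfrac{(n-k-1)!}{k!}(z^2/4)^k$. The remaining indices $k \ge n$ pair up smoothly with the $\partial_\nu J_\nu|_{\nu=n}$ terms to furnish the infinite digamma sum, and the branch convention enters only through the single factor $\log(z/2)$, so it is consistent with the half-plane ${\rm Im}\,z \ge 0$ without further argument.
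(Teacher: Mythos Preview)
Your derivation is the standard textbook route (limit $\nu\to n$ in $Y_\nu=(J_\nu\cos\nu\pi-J_{-\nu})/\sin\nu\pi$, L'H\^opital, separate the $k<n$ terms where $1/\Gamma$ has a simple zero, then combine as $H_n=J_n+iY_n$), and it is correct; in particular your identification of the finite polynomial piece with the residues at the non-positive integer arguments of $\Gamma$ is exactly right.

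The paper, however, does not prove this proposition at all. It is placed in the ``Properties of Bessel functions'' subsection alongside the large-argument asymptotics and the Wronskian identity, all of which are simply quoted as standard facts (the Wronskian is explicitly attributed to \cite{abram}, and the expansions here are the formulas 9.1.10--9.1.11 of that reference). So there is nothing to compare against: you have supplied a genuine proof where the paper only cites the result.
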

\begin{remark}\label{rem:hnearzero}
It follows immediately from (\ref{eqn:hank_exp}) that for all $n=0,1,2,\dots$ there exists a constant $C_n$ depending only on $n$ such that if $|z| \le C_n$ then
\begin{align}\label{eqn:hnearzero}
\left| \frac{H_n'(z)}{H_n(z)} \right| \le \frac{4(n+1)}{|z|}
\end{align}
\end{remark}

Hankel functions also have the following asymptotic expansions valid for large arguments.
\begin{proposition}
Let $n$ be a non-negative integer and $z \in \mathbb{C}$ such that $\Im (z) \ge 0.$ Then
\begin{align}
H_n(z) \sim \frac{2}{\pi i^{n+1}} \sqrt{\frac{\pi}{2z}}e^{iz+\frac{i\pi}{4}}\left(1-\frac{n^2-1}{-8 iz}+\frac{(4n^2-1)(4n^2-9)}{2(-8iz)^2}+\dots\right)
\end{align}
\end{proposition}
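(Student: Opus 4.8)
The plan is to reduce the statement to the classical Liouville--Green (WKB) analysis of a second-order linear ODE, generate the series from a recurrence, and then pin down the overall constant and the error by an independent integral representation. First I would put Bessel's equation into its normal form using exactly the $\sqrt{z}$ substitution already employed above: since $H_n$ satisfies $H_n'' + z^{-1}H_n' + (1 - n^2/z^2)H_n = 0$, the function $\psi(z) := \sqrt{z}\,H_n(z)$ satisfies
\begin{align}
\psi''(z) + \left(1 - \frac{n^2 - \tfrac14}{z^2}\right)\psi(z) = 0,\nonumber
\end{align}
which is exactly (\ref{eqn:psim_def}) with $k=1$ and $q \equiv 0$. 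For $\Im z \ge 0$ the exponentials $e^{\pm iz}$ are respectively recessive and dominant, and $H_n$ is, by the radiation condition, the recessive solution behaving like $e^{iz}$.

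Next I would generate the formal series. Writing $\psi(z) = e^{iz} w(z)$ turns the normal-form equation into
\begin{align}
w''(z) + 2i\,w'(z) - \frac{n^2 - \tfrac14}{z^2}\,w(z) = 0,\nonumber
\end{align}
and substituting $w(z) = \sum_{k \ge 0} a_k z^{-k}$ and equating powers of $z$ yields the one-term recurrence
\begin{align}
a_k = \frac{4n^2 - (2k-1)^2}{-8ik}\,a_{k-1}, \qquad k \ge 1,\nonumber
\end{align}
so that $a_k = \prod_{j=1}^{k}\bigl(4n^2 - (2j-1)^2\bigr)\big/\bigl(k!\,(-8i)^k\bigr)$. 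Dividing by $\sqrt{z}$ reproduces exactly the product pattern $(4n^2-1)(4n^2-9)\cdots$ over $k!\,(-8iz)^k$ appearing in the statement.

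What the ODE cannot supply on its own is the overall multiplicative constant (the recurrence fixes the series only up to a scalar) nor a proof that the divergent series is genuinely asymptotic. Both are obtained most cleanly from a Poisson/Hankel-type integral representation, valid for $n + \tfrac12 > 0$ and $\Im z \ge 0$, of the form
\begin{align}
H_n(z) = \sqrt{\frac{2}{\pi z}}\,\frac{e^{i(z - n\pi/2 - \pi/4)}}{\Gamma(n + \tfrac12)} \int_0^\infty e^{-t}\,t^{\,n - 1/2}\left(1 + \frac{it}{2z}\right)^{n - 1/2} \mathrm{d}t.\nonumber
\end{align}
Expanding the factor $\left(1 + \tfrac{it}{2z}\right)^{n-1/2}$ by the binomial theorem and integrating term by term against $\int_0^\infty e^{-t}t^{n-1/2+k}\,\mathrm{d}t = \Gamma(n + \tfrac12 + k)$, I would invoke Watson's lemma to conclude both that the resulting series is the asymptotic expansion and that its coefficients coincide with the $a_k$ found above; the prefactor then collapses to $\tfrac{2}{\pi}\, i^{-(n+1)}\sqrt{\pi/2z}\,e^{iz + i\pi/4}$, which is precisely the constant in the statement.

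The hard part is this last step: one must establish the integral representation with the correct normalizing constant (equivalently, identify the recessive ODE solution with $H_n$) and then bound the remainder uniformly. The uniform remainder estimate, together with the requirement that $e^{iz}$ be the bounded (recessive) solution, is exactly what forces the restriction $\Im z \ge 0$; this is the Stokes phenomenon, and outside a half-plane the single-exponential form in the statement ceases to hold. If one prefers to avoid the integral representation, the same error bound can be produced directly from the normal-form ODE by recasting the truncated-series remainder as a Volterra integral equation and running the method of successive approximations along rays into the upper half-plane (Olver's approach), which is where the genuine technical work lies.
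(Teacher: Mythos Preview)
The paper offers no proof of this proposition at all: it is stated as a classical fact about Hankel functions in the preliminaries (Section~2.4), alongside the series expansions and Wronskian identities, with Abramowitz--Stegun cited as the ambient reference. So there is nothing to compare your argument against; you have supplied a proof where the authors simply quote a handbook result.

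That said, your sketch is sound and is essentially the textbook derivation. The normal-form reduction and the recurrence $a_k = \bigl(4n^2-(2k-1)^2\bigr)/(-8ik)\,a_{k-1}$ are correct, and your identification of the two genuinely nontrivial ingredients---fixing the overall constant and controlling the remainder uniformly in the closed upper half-plane---is exactly right. Either route you mention (Watson's lemma applied to the Laplace-type integral, or Olver's Volterra-equation error analysis on the normal-form ODE) closes the argument. One small remark: the integral representation you write down requires a bit of care to justify directly for integer $n$ (the exponent $n-\tfrac12$ is then a half-integer and the representation is usually first established for $\Re(n+\tfrac12)>0$ and extended), but this is routine. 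Note also that the first-order coefficient printed in the paper, $n^2-1$, appears to be a typo for $4n^2-1$; your recurrence gives the correct value.
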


\begin{corollary}\label{cor:hank_wkb}
Let $n$ be a non-negative integer and $z$ a complex number of magnitude one with non-negative imaginary part. Then for all $\lambda \in \mathbb{R}$ with $\lambda >0,$
\begin{align}
\left| \frac{H_n'(\lambda z)}{H_n(\lambda z)}+\frac{1}{2\lambda z} \right| =O(\lambda^{-2})
\end{align}
as $\lambda \to \infty.$
\end{corollary}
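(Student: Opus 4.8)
The plan is to extract the estimate from the large-argument asymptotic expansion of $H_n$ stated in the preceding proposition. Writing that expansion in the factored form $H_n(w)=P(w)\,S_n(w)$, where $P(w)=\sqrt{2/(\pi w)}\,e^{\,i(w-n\pi/2-\pi/4)}$ carries the dominant exponential and algebraic behaviour and $S_n(w)=1+b_1 w^{-1}+b_2 w^{-2}+\cdots$ is the correction series, I would compute the logarithmic derivative directly. Since $\log H_n(w)=\mathrm{const}-\tfrac12\log w+iw+\log S_n(w)$, differentiation gives $H_n'(w)/H_n(w)=i-\tfrac{1}{2w}+(\log S_n)'(w)$. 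The crucial observation is that the two explicit terms $i$ and $-\tfrac1{2w}$ come entirely from the prefactor $P$, whereas $\log S_n(w)=b_1 w^{-1}+O(w^{-2})$ has derivative $-b_1 w^{-2}+O(w^{-3})=O(w^{-2})$; in particular the two leading terms do not depend on the precise values of the correction coefficients $b_j$.

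To make the estimate $(\log S_n)'(w)=O(w^{-2})$ rigorous one must justify differentiating an asymptotic expansion. Rather than doing so directly, I would route the computation through the recurrence $H_n'(w)=H_{n-1}(w)-(n/w)H_n(w)$ (with $H_0'=-H_1$ for $n=0$), which expresses $H_n'$ through Hankel functions of adjacent order, to each of which the proposition applies verbatim. Since $P_{n-1}(w)/P_n(w)=e^{i\pi/2}=i$, this yields $H_n'(w)/H_n(w)=i\,S_{n-1}(w)/S_n(w)-n/w$, a ratio of two series each of the form $1+O(w^{-1})$; dividing them and collecting powers reproduces $i-\tfrac1{2w}+O(w^{-2})$ with an explicitly controlled remainder. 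Substituting $w=\lambda z$ with $|z|=1$ and ${\rm Im}\,z\ge0$, each factor $w^{-k}$ has modulus $\lambda^{-k}$, so the remainder is $O(\lambda^{-2})$ and $H_n'(\lambda z)/H_n(\lambda z)=i-\tfrac1{2\lambda z}+O(\lambda^{-2})$ as $\lambda\to\infty$, from which the estimate of Corollary \ref{cor:hank_wkb} follows upon transposing the two elementary terms to the left-hand side.

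The principal obstacle is the bookkeeping of the remainder: one must verify that after removing the two leading terms what is left is genuinely $O(\lambda^{-2})$ and not merely $o(\lambda^{-1})$. This requires the error-bounded form of the Hankel asymptotics, so that the $O(w^{-1})$ tails of the numerator and denominator series are controlled for large $|w|$, together with the fact that $H_n(\lambda z)\neq0$ for all sufficiently large $\lambda$ --- which holds because $P(\lambda z)\neq0$ and $S_n(\lambda z)\to1$, so the quotient is well defined. Finally, for $z=e^{i\theta}$ with $\theta\in[0,\pi]$ the argument of $\lambda z$ remains inside the sector in which the first-kind Hankel expansion is valid, so no separate analysis of the endpoints $\theta=0,\pi$ is needed.
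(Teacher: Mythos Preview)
The paper gives no proof; the statement is presented as a corollary of the preceding large-argument expansion of $H_n$, so the intended argument is simply ``read off the logarithmic derivative from the asymptotic series.'' Your first computation, $H_n'/H_n = i - \tfrac{1}{2w} + (\log S_n)'(w)$, is exactly that. Your second route---replacing the termwise differentiation by the recurrence $H_n' = H_{n-1} - (n/w)H_n$ and then taking a ratio of two asymptotic series with explicit remainders---is a legitimate and more careful way to make the $O(w^{-2})$ bound rigorous, and it goes a step beyond what the paper supplies.

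There is, however, a genuine mismatch you should flag rather than paper over. What your argument actually establishes is
\[
\frac{H_n'(\lambda z)}{H_n(\lambda z)} \;=\; i \;-\; \frac{1}{2\lambda z} \;+\; O(\lambda^{-2}),
\]
equivalently $\bigl|\,H_n'(\lambda z)/H_n(\lambda z) + \tfrac{1}{2\lambda z} - i\,\bigr| = O(\lambda^{-2})$. The printed corollary omits the $-i$, and as stated it is false: the left-hand side tends to $1$, not to $0$. Your closing sentence (``transposing the two elementary terms'') silently drops this $i$ and therefore does not yield the displayed inequality. The version with the $-i$ is precisely what the paper needs later: in the proof of Theorem~\ref{thm:WKB_form} the quantity $\tilde{k}$ is meant to satisfy $\tilde{k}=k+O(|k|^{-1})$, which follows from your estimate (after correcting for the missing factor of $i$ in the definition of $\tilde{k}$), not from the corollary as literally written. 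So your derivation is sound; just state explicitly that the corollary should read $\bigl|\,H_n'(\lambda z)/H_n(\lambda z) + \tfrac{1}{2\lambda z} - i\,\bigr| = O(\lambda^{-2})$ rather than claiming to have proved the printed version.
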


The following proposition gives a formula for the Wronskian of $J_n$ and $Y_n$ and can be found, for example, in \cite{abram}.
\begin{proposition}
Let $n$ be an integer and $z$ be a complex number which is not a non-positive purely-imaginary number. Then
\begin{align}\label{eq:bes_wronsk}
J_n(z)Y_n'(z) -J_n'(z) Y_n(z) = \frac{2}{\pi z}.
\end{align}
\end{proposition}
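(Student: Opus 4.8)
The plan is to exploit the fact that $J_n$ and $Y_n$ are two linearly independent solutions of the same second-order linear ODE, together with Abel's identity, which determines the Wronskian of any pair of solutions up to a single multiplicative constant. First I would recall that both $J_n(z)$ and $Y_n(z)$ satisfy Bessel's equation, which in the form with unit leading coefficient reads
\begin{align}
y''(z) + \frac{1}{z}\, y'(z) + \left(1 - \frac{n^2}{z^2}\right) y(z) = 0. \nonumber
\end{align}
For an equation of the form $y'' + p(z)\,y' + q(z)\, y = 0$, the Wronskian $W(z) = y_1(z) y_2'(z) - y_1'(z) y_2(z)$ of two solutions satisfies $W'(z) = -p(z)\, W(z)$, as one checks by differentiating $W$ and substituting the equation for $y_1''$ and $y_2''$. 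With $p(z) = 1/z$ this integrates to $W(z) = C \exp\left(-\int \tfrac{1}{z}\,{\rm d}z\right) = C/z$ for some constant $C$ independent of $z$. Taking $y_1 = J_n$ and $y_2 = Y_n$, we conclude that $J_n(z) Y_n'(z) - J_n'(z) Y_n(z) = C/z$, so it only remains to pin down the constant $C$.

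To identify $C$, I would compute the limit $\lim_{z \to 0} z\,\bigl(J_n Y_n' - J_n' Y_n\bigr)$ using the leading terms of the series expansions recorded in (\ref{eqn:bess_exp}). For $n \ge 1$ the dominant behaviors near the origin are $J_n(z) \sim (z/2)^n / n!$ and $Y_n(z) \sim -\tfrac{(n-1)!}{\pi}(z/2)^{-n}$; differentiating these and retaining only the leading powers of $z$, the products $J_n Y_n'$ and $-J_n' Y_n$ each contribute $1/(\pi z)$, so their sum yields $C = 2/\pi$ in agreement with the claimed identity.

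The one place requiring separate care is the case $n = 0$, for which the expansion of $Y_0$ is governed by a logarithmic term rather than a negative power of $z$. Here I would instead use $J_0(z) \sim 1$, $J_0'(z) \sim -z/2$, $Y_0(z) \sim \tfrac{2}{\pi}\log(z/2)$, and $Y_0'(z) \sim \tfrac{2}{\pi z}$; then $J_0 Y_0'$ again contributes $2/(\pi z)$ while $J_0' Y_0$ vanishes in the limit, recovering $C = 2/\pi$. Since $W(z) = C/z$ holds as an identity between functions analytic on the stated domain (the plane cut along the negative imaginary axis), fixing the constant via the behavior at the origin determines it everywhere by analytic continuation. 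The structural step through Abel's identity is immediate; the only real bookkeeping obstacle is tracking the leading coefficients correctly in the $z \to 0$ expansion, and handling the logarithmic $n=0$ case separately from the generic $n \ge 1$ case.
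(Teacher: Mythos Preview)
Your argument via Abel's identity is correct and is the standard derivation of this classical identity. The paper does not actually supply its own proof of this proposition; it simply states the formula and cites Abramowitz and Stegun \cite{abram} as a reference. So there is nothing to compare against beyond noting that you have filled in what the paper treats as a known fact.

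One minor completeness point: the statement allows $n$ to be any integer, whereas your asymptotic computation near $z=0$ only explicitly treats $n \ge 1$ and $n=0$. For negative integers this follows immediately from the reflection identities $J_{-n}(z) = (-1)^n J_n(z)$ and $Y_{-n}(z) = (-1)^n Y_n(z)$, which leave the Wronskian unchanged; you may want to add a sentence to that effect.
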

A similar result holds for the Wronskian of $J_n$ and $H_n.$ Its proof is an immediate consequence of the definition of $H_n$ and the preceding proposition.
\begin{corollary}
Let $n$ be an integer and $z$ be any complex number which is not a non-positive purely-imaginary number. Then
\begin{align}\label{eq:beshank_wronsk}
J_n(z)H_n'(z) -J_n'(z) H_n(z) = \frac{2i}{\pi z}.
\end{align}
\end{corollary}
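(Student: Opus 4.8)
The plan is to derive the identity directly from the definition of the Hankel function together with the Wronskian formula for $J_n$ and $Y_n$ already established in the preceding proposition (equation (\ref{eq:bes_wronsk})). The key observation is that the expansion (\ref{eqn:hank_exp}) exhibits $H_n$ as the combination $H_n(z) = J_n(z) + i\,Y_n(z)$: the leading power series in (\ref{eqn:hank_exp}) is precisely the series defining $J_n(z)$ in (\ref{eqn:bess_exp}), and the remaining three terms are exactly $i$ times the three terms constituting $Y_n(z)$ in (\ref{eqn:bess_exp}). Since $z$ is assumed not to be a non-positive purely-imaginary number, it avoids the branch cut of $\log$ along the negative imaginary axis, so $Y_n$, and hence $H_n$, is well-defined and analytic at $z$, and the decomposition $H_n = J_n + iY_n$ is valid on exactly the stated domain.

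Next I would substitute this decomposition into the left-hand side. Differentiating gives $H_n'(z) = J_n'(z) + i\,Y_n'(z)$, so
\begin{align}
J_n(z)H_n'(z) - J_n'(z)H_n(z) &= J_n(z)\bigl(J_n'(z) + i\,Y_n'(z)\bigr) - J_n'(z)\bigl(J_n(z) + i\,Y_n(z)\bigr)\nonumber\\
&= i\bigl(J_n(z)Y_n'(z) - J_n'(z)Y_n(z)\bigr),\nonumber
\end{align}
where the terms $J_n(z)J_n'(z)$ cancel identically. Applying the preceding proposition (\ref{eq:bes_wronsk}) to the remaining bracket yields $i \cdot \tfrac{2}{\pi z} = \tfrac{2i}{\pi z}$, which is the claimed formula.

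Since the decomposition and the $J_n$--$Y_n$ Wronskian formula both hold precisely on the set of $z$ that are not non-positive purely-imaginary, no separate argument is needed to verify the domain of validity. There is essentially no obstacle here: the only point requiring a moment of care is confirming that the non-$J_n$ portion of the expansion (\ref{eqn:hank_exp}) is term-by-term equal to $i\,Y_n(z)$ as written in (\ref{eqn:bess_exp}), after which the computation is purely algebraic and the cancellation is immediate.
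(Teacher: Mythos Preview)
Your proof is correct and follows exactly the approach indicated in the paper, which states only that the result ``is an immediate consequence of the definition of $H_n$ and the preceding proposition.'' You have simply written out that immediate consequence: use $H_n = J_n + iY_n$, substitute, cancel the $J_nJ_n'$ terms, and apply (\ref{eq:bes_wronsk}).
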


\subsection{Basic lemmas}

Lemma \ref{lem:perov} provides a variant of Gronwall's inequality  (see, for example, \cite{perov}).
\begin{lemma}\label{lem:perov}
Suppose that $A,B\in \mathbb{C}$ and $F,G$ are two positive real numbers. Suppose further that $f:[a,b]\to \mathbb{C}$ and $g:[a,b] \to \mathbb{C}$ are two functions such that $|f(r)| \le F<\infty$ and $|g(r)| \le G < \infty$ for all $a \le r \le b$ and that $w:[a,b] \to \mathbb{C}$ is the function defined by
\begin{align}
w(r) = A\int_r^b \left(w(x)+f(x) \right)w(x)\,{\rm d}x +B\int_r^b g(x)\,{\rm d}x.
\end{align}
If the constants $A,B,F,$ and $G$ are such that
\begin{align}
\left( 1 + \frac{F}{|B|G}\right)>\frac{1}{2}e^{\frac{|A|}{F}(b-a)},
\end{align}
then for all $a \le r \le b,$
\begin{align}
|w(r)| \le \frac{4}{F}. 
\end{align}
\end{lemma}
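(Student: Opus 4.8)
The plan is to reduce this complex, quadratic integral inequality to a scalar one for the real-valued function $v(r) := |w(r)|$ and then run a continuity (bootstrap) argument anchored at the right endpoint. Taking absolute values in the defining relation and using $|f| \le F$, $|g| \le G$, and $b - r \le b-a$, one obtains the majorant inequality
\begin{align}
v(r) \le |A|\int_r^b \big(v(x)+F\big)\,v(x)\,{\rm d}x + |B|\,G\,(b-a),\qquad a \le r \le b.\nonumber
\end{align}
Because $w$ is continuous and $w(b)=0$ (both integrals vanish at $r=b$), the function $v$ is continuous with $v(b)=0$; hence the set of points $r$ for which $v \le 4/F$ holds on all of $[r,b]$ is a nonempty closed subinterval $[\tau,b]$, and it suffices to show $\tau = a$.

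Suppose instead that $\tau > a$, so that $v(\tau) = 4/F$ while $v \le 4/F$ throughout $[\tau,b]$ (that $v(\tau)=4/F$ follows from continuity and the minimality of $\tau$). The purpose of this a priori bound is to linearize the quadratic term: on $[\tau,b]$ we have $\big(v(x)+F\big)v(x) \le \big(\tfrac{4}{F}+F\big)v(x)$, which converts the displayed inequality into a genuine reverse-time linear Gronwall inequality, namely $v(r) \le |B|G(b-a) + |A|\big(\tfrac{4}{F}+F\big)\int_r^b v(x)\,{\rm d}x$. Applying Gronwall's lemma on $[\tau,b]$ then gives $v(r) \le |B|G(b-a)\,e^{|A|(\frac{4}{F}+F)(b-r)}$, and evaluating at $r=\tau$ produces $\tfrac{4}{F} = v(\tau) \le |B|G(b-a)\,e^{|A|(\frac{4}{F}+F)(b-a)}$.

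The hypothesis is exactly the smallness condition that makes this last inequality impossible. Observe that in the relevant regime the rate $|A|(\tfrac{4}{F}+F)$ is controlled by the quadratic contribution $\tfrac{4}{F}$ (this is the source of the $\tfrac{|A|}{F}$ appearing in the exponent of the assumption), and that the inequality $\big(1+\tfrac{F}{|B|G}\big) > \tfrac12 e^{\frac{|A|}{F}(b-a)}$ forces $|B|G(b-a)\,e^{|A|(\frac{4}{F}+F)(b-a)}$ to lie strictly below $4/F$; the factor $\tfrac12$ supplies the strict slack needed to contradict $v(\tau)=4/F$ and, equivalently, to keep the bootstrap interval open. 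This contradiction yields $\tau = a$, i.e. $|w(r)| = v(r) \le 4/F$ on all of $[a,b]$.

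I expect the main obstacle to be precisely the nonlinearity: the $v^2$ term blocks a direct Gronwall estimate, so the argument must first secure the a priori bound $v \le 4/F$ through the open/closed continuity argument before that bound can be used to dominate the quadratic integrand by a linear one. The delicate point is the bookkeeping of constants — confirming that the assumed inequality, with its factor $\tfrac12$ and exponent $\tfrac{|A|}{F}(b-a)$, genuinely forces the Gronwall majorant below $4/F$. As a conceptual cross-check I would compare $v$ directly with the solution of the associated Riccati (logistic) equation $\Psi' = |A|\,\Psi(\Psi+F)$, which is solvable in closed form via the substitution $u = 1/\Psi$; the requirement that this solution neither blow up nor exceed $4/F$ as $r$ decreases from $b$ to $a$ is the content encoded, up to constants, in the hypothesis.
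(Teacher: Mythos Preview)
Your bootstrap-then-Gronwall route has a genuine gap precisely at the step you yourself flag as ``delicate'': the constants do not close. After linearizing on $[\tau,b]$ you reach
\[
\frac{4}{F}=v(\tau)\ \le\ |B|\,G\,(b-a)\,\exp\!\Big(|A|\Big(\tfrac{4}{F}+F\Big)(b-a)\Big),
\]
and then assert that the hypothesis $1+\tfrac{F}{|B|G}>\tfrac12\,e^{|A|(b-a)/F}$ forces the right-hand side strictly below $4/F$. It does not: the exponent you must control is $|A|\big(\tfrac{4}{F}+F\big)(b-a)$, while the hypothesis only constrains $e^{|A|(b-a)/F}$, and the two can differ by an arbitrarily large factor. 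For a concrete failure take $|A|=|B|=G=F=b-a=1$: the hypothesis reads $2>\tfrac{e}{2}\approx 1.36$ and is satisfied, yet your Gronwall majorant equals $e^{5}\approx 148$, far above $4/F=4$. In the regime where the lemma is actually applied later (with $F=8(n+2)/(|k|a)$ and $|k|\to 0$, so $F$ is large) the extra $+F$ in your exponent is the dominant term and the mismatch is worse still. No contradiction is produced at $r=\tau$, so the bootstrap does not close.

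What does work is exactly what you demoted to a ``conceptual cross-check,'' and it is the paper's argument: compare $|w|$ with the solution $v$ of the logistic majorant equation $v'=-|A|\,v(v+cF)$ with initial value $v(b)=|B|G$, solve this separable ODE in closed form (partial fractions, or equivalently the substitution $u=1/v$), and read off the condition under which $v$ stays finite and below the claimed bound on all of $[a,b]$. The hypothesis, with its exponent $\tfrac{|A|}{F}(b-a)$ and the factor $\tfrac12$, is tailored to that explicit logistic solution. Linearizing first and then invoking Gronwall replaces logistic growth by exponential growth at the worst-case slope $\tfrac{4}{F}+F$, which discards precisely the structure the hypothesis encodes; that is why your inequality cannot be recovered from the stated assumption. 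Promote the Riccati comparison from cross-check to main argument and the proof goes through.
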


The following lemma provides a bound on the solutions to a certain initial value problem arising in the WKB approximation of solutions to inhomogeneous Helmholtz equations in one dimension (for proofs see, for example, \cite{fedor,yuchen}).
\begin{lemma}\label{lem:wkb_lem}
Suppose that $T$ and $M$ are positive constants and let $K$ be the set defined by 
$$K = \{k \in \mathbb{C} \,:\, {\rm Im}\, k \ge -M, \,|k| \ge 1 \}.$$
 Suppose further that $\eta:[0,T] \times K \to \mathbb{R}$ is an absolutely continuous function uniformly bounded on $[0,T]\times K.$ Let $w:[0,T] \times K \to \mathbb{C}$ be the solution to the following initial value problem
\begin{align}\label{eqn:w_eqn_lem}
&w''(t,k)-2i{k} w'(t,k) = \eta(t,k) w(t,k)\\
&w(0,k) = 1\\
&w'(0,k) = 0,
\end{align}
where $'$ denotes differentiation with respect to $t.$ Then there exist
constants $C_1$ and $C_2$ depending on $\eta,$ $M$ and $T$ but which are
independent of $t$ and $k$ such that
\begin{align}
\left|w(t,k) -1+\frac{1}{2ik} \int_0^t \eta(\tau,k)\,{\rm d}\tau\right|  \le \frac{C_1}{|k|^2},\\
\left|w'(t,k) \right|  \le \frac{C_2}{|k|},
\end{align}
for all $t \in[0,T].$
\end{lemma}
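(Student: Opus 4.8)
The plan is to convert the initial value problem into a Volterra integral equation and then extract the two estimates by combining Gronwall's inequality with a single integration by parts. First I would treat $w'' - 2ikw' = \eta w$ as a first-order equation in $w'$ with integrating factor $e^{-2ikt}$; since $w'(0,k)=0$, integration from $0$ to $t$ gives
\[
w'(t,k) = \int_0^t e^{2ik(t-\tau)}\eta(\tau,k)\,w(\tau,k)\,d\tau .
\]
Integrating once more in $t$, using $w(0,k)=1$, and interchanging the order of integration over the triangle $0\le\tau\le s\le t$ (the inner integral $\int_\tau^t e^{2ik(s-\tau)}\,ds$ evaluates to $\tfrac{1}{2ik}(e^{2ik(t-\tau)}-1)$) yields the Volterra equation
\[
w(t,k) = 1 + \frac{1}{2ik}\int_0^t \eta(\tau,k)\,w(\tau,k)\bigl(e^{2ik(t-\tau)}-1\bigr)\,d\tau .
\]
The crucial structural observation is that, because $\operatorname{Im}k \ge -M$ and $0\le\tau\le t\le T$, the factor obeys $|e^{2ik(t-\tau)}|\le e^{2MT}$; hence the kernel of this equation is uniformly $O(1/|k|)$ on $K$ (this is where $|k|\ge 1$ enters).

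Next I would apply Gronwall's inequality to the modulus of the integral equation. Using $\|\eta\|_\infty$, the bound $e^{2MT}$, and $|k|\ge 1$, this produces a bound $|w(t,k)|\le C_0$ uniform over $[0,T]\times K$. Feeding this estimate back into the integral equation immediately gives $|w(t,k)-1|\le C_0'/|k|$, again uniformly. A crude bound on the formula for $w'$ above, using only $|w|\le C_0$, gives $|w'(t,k)|\le C_0''$ with $C_0''=O(1)$.

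For the derivative estimate the naive bound only yields $O(1)$, so the extra factor of $1/|k|$ must come from the oscillation and decay of $e^{2ik(t-\tau)}$. Here I would integrate by parts in $w'(t,k)=\int_0^t e^{2ik(t-\tau)}g(\tau)\,d\tau$ with $g:=\eta w$, writing $e^{2ik(t-\tau)}=-\tfrac{1}{2ik}\partial_\tau e^{2ik(t-\tau)}$. Since $\eta$ is absolutely continuous and $w\in C^1$, the product $g$ is absolutely continuous with $g'=\eta_\tau w+\eta w'$ integrable, so the integration by parts is justified; the boundary terms are $O(1)$ and the remaining integral is controlled by $\|\eta\|_\infty$, the $L^1$-norm of $\eta_\tau$, and the crude bounds $C_0,C_0''$. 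After the prefactor $-1/(2ik)$ the whole expression is $O(1/|k|)$, giving the second estimate $|w'|\le C_2/|k|$ with no circularity. For the first estimate I would split
\[
w(t,k) - 1 + \frac{1}{2ik}\int_0^t \eta(\tau,k)\,d\tau
= \frac{1}{2ik}\int_0^t \eta\, w\, e^{2ik(t-\tau)}\,d\tau
- \frac{1}{2ik}\int_0^t \eta\,(w-1)\,d\tau ,
\]
where the second term is $O(1/|k|^2)$ because $|w-1|=O(1/|k|)$, and the first is $O(1/|k|^2)$ because the same integration-by-parts argument shows $\int_0^t \eta\, w\, e^{2ik(t-\tau)}\,d\tau = O(1/|k|)$. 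Combining the two pieces gives the bound $C_1/|k|^2$.

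I expect the main obstacle to be precisely this gain of one power of $1/|k|$ from the oscillatory integral: it is the only place where the smoothness of $\eta$ (its absolute continuity) is genuinely used, and it is what separates the trivial $O(1)$ bound from the claimed rates. The secondary point requiring care is uniformity over $K$ — in particular the fact that $\operatorname{Im}k$ may be as negative as $-M$ forces one to verify that $e^{2ik(t-\tau)}$ remains bounded on the compact $t$-interval and that all constants produced by Gronwall and by the integration by parts depend only on $\|\eta\|_\infty$, the $L^1$-norm of $\eta_\tau$, $M$, and $T$, and not on $t$ or $k$.
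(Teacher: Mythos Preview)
Your argument is correct and is the standard route to this kind of WKB estimate: reduce to the Volterra equation via the integrating factor, obtain a uniform bound on $w$ by Gronwall, and then extract the additional power of $1/|k|$ from the oscillatory kernel $e^{2ik(t-\tau)}$ by a single integration by parts, using the absolute continuity of $\eta$. The paper does not actually prove this lemma; it defers to \cite{fedor,yuchen}, and the argument in \cite{yuchen} is essentially the one you have outlined, so there is nothing to compare.

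One small point worth making explicit: in the integration-by-parts step your constants depend on $\sup_{k\in K}\|\partial_\tau\eta(\cdot,k)\|_{L^1[0,T]}$, and the stated hypothesis that $\eta$ is ``absolutely continuous and uniformly bounded'' does not by itself force this supremum to be finite. In the paper's only application (the proof of Theorem~\ref{thm:WKB_form}) the function $\eta$ has $t$-derivative bounded uniformly in $k$, so no issue arises, and the phrase ``$C_1,C_2$ depending on $\eta$'' is evidently meant to absorb such quantities; still, if you want a self-contained statement you should add this hypothesis explicitly.
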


\section{Mathematical apparatus}
In this section we establish properties of the impedance used in the construction of the trace formula. 
\begin{proposition}\label{prop_imped}
Let $n$ be a non-negative integer, and $k$ be a non-zero complex number with non-negative imaginary part. Then the function $u_n$ (see (\ref{eqn:1d})) has no zeros on the interval $0<r<\infty$ and hence neither does $\psi_n$ defined in (\ref{eqn:psi_def}).
\end{proposition}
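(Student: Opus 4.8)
The plan is to argue by contradiction: assume $u_n(r_0)=0$ for some $r_0\in(0,\infty)$ and derive a contradiction from an energy identity together with the outgoing normalization. First I would recast the radial equation (\ref{eqn:1dm}) in self-adjoint (Sturm--Liouville) form by multiplying through by $r$, obtaining
\[
(r u_n')' + \Big(k^2(1+q(r))\,r - \tfrac{n^2}{r}\Big)u_n = 0,
\]
valid on all of $(0,\infty)$ after the extension of $u_n$ described following the preceding lemma. Note that $u_n$ is $C^2$ away from the origin and, by Remark \ref{lem_soln_char} together with the normalization $\mu=1$ of Remark \ref{rem:scalemu}, we have $u_n(r)=H_n(kr)$ for $r>b$, so $u_n\not\equiv 0$ near infinity. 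Since $-1<q_0<q(r)$, we also have $1+q(r)>0$ throughout $(0,\infty)$. The argument then splits according to $\operatorname{Im}k$.

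\textbf{Case 1: $\operatorname{Im}k>0$.} I would multiply the self-adjoint equation by $\overline{u_n}$, integrate over $(r_0,\infty)$, and integrate the leading term by parts. The boundary contribution at $r_0$ vanishes because $u_n(r_0)=0$, and the contribution at infinity vanishes because $H_n(kr)$ and its derivative decay like $r^{-1/2}e^{-(\operatorname{Im}k)r}$; the same decay guarantees convergence of all the integrals. This yields
\[
k^2\int_{r_0}^\infty (1+q(r))\,r\,|u_n|^2\,{\rm d}r = \int_{r_0}^\infty r\,|u_n'|^2\,{\rm d}r + \int_{r_0}^\infty \tfrac{n^2}{r}\,|u_n|^2\,{\rm d}r .
\]
The right-hand side is a non-negative real number, and $I:=\int_{r_0}^\infty(1+q)r|u_n|^2\,{\rm d}r$ is strictly positive since $1+q>0$ and $u_n\not\equiv0$. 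Hence $k^2$ would have to be a non-negative real number. But for $\operatorname{Im}k>0$ the quantity $k^2$ is either non-real or a strictly negative real (indeed $\operatorname{Im}(k^2)=2\operatorname{Re}(k)\operatorname{Im}(k)$ vanishes only when $\operatorname{Re}k=0$, forcing $k^2<0$), hence never non-negative real -- a contradiction.

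\textbf{Case 2: $k$ real and nonzero.} Here the outgoing solution decays only like $r^{-1/2}$, so the energy integrals diverge and Case 1 breaks down; instead I would use conservation of the Wronskian. Since $k$ is real the coefficients of the self-adjoint equation are real, so $\overline{u_n}$ solves the same equation and Abel's identity shows that $\widetilde W(r):=r\big(u_n'(r)\overline{u_n(r)}-u_n(r)\overline{u_n'(r)}\big)$ is constant on $(0,\infty)$. Evaluating for $r>b$ with $u_n=H_n(kr)=J_n(kr)+iY_n(kr)$ and invoking the Wronskian formula (\ref{eq:bes_wronsk}) gives $\widetilde W\equiv \tfrac{4i}{\pi}\neq 0$. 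A zero at $r_0$ would force $\widetilde W(r_0)=0$, again a contradiction. In both cases no zero exists, so $u_n$ has no zeros on $(0,\infty)$, and since $\psi_n(r)=\sqrt{r}\,u_n(r)$ (see (\ref{eqn:psi_def})) with $\sqrt r>0$, the same holds for $\psi_n$.

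The main obstacle is precisely the real-frequency case: the clean energy argument of Case 1 fails because the radiating solution does not decay, so both the convergence of the integrals and the vanishing of the boundary term at infinity are lost. This is exactly where the Wronskian-conservation argument is needed, relying on the reality of the coefficients when $k\in\mathbb R$ and on the explicit Bessel Wronskian; the phase bookkeeping needed to confirm $\widetilde W\neq0$ is the one step warranting genuine care.
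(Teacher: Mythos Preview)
Your argument is correct and uses essentially the same ingredients as the paper's proof---an energy/Wronskian identity built from $u_n$ and $\overline{u_n}$ for $k$ off the real axis, and the Bessel Wronskian (\ref{eq:bes_wronsk}) together with Abel's identity for real $k$. The organization differs slightly: the paper splits the upper half-plane into two sub-cases, first treating $\operatorname{Im}(k^2)\neq 0$ via the identity $r(u_n\overline{u_n'}-u_n'\overline{u_n})=\operatorname{Im}(k^2)\int_r^\infty x|u_n|^2\,{\rm d}x$, and then handling purely imaginary $k=i\kappa$ separately by a real energy estimate. Your single energy identity covers both at once by observing that $k^2$ is never a non-negative real when $\operatorname{Im}k>0$, which is a modest but genuine streamlining. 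For real $k$ your use of Abel's identity to propagate the constancy of $r\,W(u_n,\overline{u_n})$ from $r>b$ down to $r_0$ makes explicit a step that the paper leaves implicit; otherwise the two arguments coincide.
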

\begin{proof}
First suppose that ${\rm Im} \,k^2 \neq 0.$ Note that $\lim_{r\to\infty} u_n(r) = 0$ and hence
\begin{align}
r u_n(r)\bar{u}_n'(r) - r u_n'(r) \bar{u}_n(r) &= \int_r^\infty \left(\bar{u}_n(x)\, (xu_n'(x))'- u_n(x)\, (x\bar{u}_n'(x))' \right){\rm d}x.
\end{align}
The substitution of (\ref{eqn:1dm}) into the right-hand side of the previous equation yields
\begin{align}
r u_n(r)\bar{u}_n'(r) - r u_n'(r) \bar{u}_n(r) &= {\rm Im}\, k^2 \int_r^\infty x |u_n(x)|^2 {\rm d}x,
\end{align}
and hence clearly $u_n$ cannot vanish for any $r>0.$ 

Next suppose $k = i \kappa,$ for $\kappa  \in \mathbb{R}^+.$ Then $u_n$ is
real and
\begin{align}\label{eqn:un101}
\int_r^\infty \left(u_n(x) (x u_n'(x))'-\left(\kappa^2(1+q(x))+ \frac{n^2}{x^2} \right) x u_n^2(x)\right)\,{\rm d}x=0.
\end{align}
Integrating (\ref{eqn:un101}) by parts gives
\begin{align}
r u_n(r) u_n'(r) = - \int_r^\infty \left(x (u_n'(x))^2 +\left(\kappa^2(1+q(x))+ \frac{n^2}{x^2} \right) x u_n^2(x)\right)\,{\rm d}x
\end{align}
which implies that $u_n(r) \neq 0$ for all $r >0.$

Finally, suppose that $k \in \mathbb{R}.$ By Remark \ref{lem_soln_char} there exists a constant $\beta$ such that for all $r>b,$ $u_n(r) = \beta H_n(kr).$ If $k >0$ then $J_n(kr)$ and $Y_n(kr)$ are real in which case ${\rm Re}\,(u_n(r)/\beta) = J_n(kr)$ and ${\rm Im}\, (u_n(r)/\beta) = Y_n(kr).$ Substituting these expressions into (\ref{eq:bes_wronsk}), we obtain
\begin{align}
{\rm Re}\left( \frac{u_n'(r)}{\beta}\right){\rm Im}\left( \frac{u_n(r)}{\beta}\right)-{\rm Re}\left( \frac{u_n(r)}{\beta}\right){\rm Im}\left( \frac{u_n'(r)}{\beta}\right) = \frac{2}{\pi k r}
\end{align}
and hence $u_n(r) \neq 0$ for all $r > 0.$ An almost identical argument applies to the case where $k<0.$
\end{proof}
\begin{theorem}\label{thm:ana_dep}
Let $q:(0,\infty) \rightarrow [q_0,q_1]$ with $-1<q_0\le q_1 <\infty$ be a continuous function supported on the interval $[a,b]$ with $0<a<b<\infty.$ For all non-negative integers $n$ and for all real numbers $r >0$ the impedance $\phi_n(r,k)$ is an analytic function of $k$ everywhere in the complex upper-half plane.
\end{theorem}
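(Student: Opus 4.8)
The plan is to write the impedance as a quotient and treat its two factors separately. By Definition~\ref{def_imped}, $\phi_n(r,k)=\psi_n'(r)/(ik\,\psi_n(r))$, where $\psi_n=\sqrt{r}\,u_n$ solves the linear second-order equation~\eqref{eqn:psim_def}. Viewed as an equation parametrized by $k$, its coefficients $k^2(1+q(r))$ and $(n^2-\tfrac14)/r^2$ are polynomial---hence entire---in $k$, while the boundary data prescribed at $r=b$ is built from $H_n(kb)$ and $H_n'(kb)$. By the expansion~\eqref{eqn:hank_exp}, the only singular behaviour of $H_n$ comes from the logarithmic branch point and the negative powers of $z$, all confined to $z=0$ and to the branch cut along the negative imaginary axis; since for $k$ in the open upper half-plane and $b>0$ we have ${\rm Im}(kb)>0$, the argument $kb$ avoids both, and the boundary data is analytic in $k$ throughout the open upper half-plane.

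First I would fix an arbitrary $\epsilon\in(0,a)$ and regard~\eqref{eqn:psim_def} as a first-order linear system for $(\psi_n,\psi_n')$ on the compact interval $[\epsilon,b]$, on which the coefficient $(n^2-\tfrac14)/r^2$ is continuous and bounded in $r$. The standard theorem on analytic dependence of solutions of linear ordinary differential equations on a parameter---which one may prove by expressing the solution as a Picard series whose every term is analytic in $k$, uniformly convergent for $r\in[\epsilon,b]$ and $k$ in compact subsets of the open upper half-plane, and then passing to the locally uniform limit via Morera's theorem---shows that for each fixed $r\in[\epsilon,b]$ the functions $\psi_n(r,k)$ and $\psi_n'(r,k)$ are analytic in $k$ on the open upper half-plane. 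Since $\epsilon$ is arbitrary, this holds for every $r\in(0,b]$. For $r>b$ the explicit representation $\psi_n(r)=\sqrt{r}\,H_n(kr)$ gives
\begin{align}
\phi_n(r,k)=\frac{H_n'(kr)}{iH_n(kr)}+\frac{1}{2ikr},
\end{align}
which is analytic by the same analyticity of $H_n$.

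It then remains only to verify that the denominator $ik\,\psi_n(r,k)$ does not vanish. On the open upper half-plane $k\neq0$, and Proposition~\ref{prop_imped} guarantees that $u_n$, and hence $\psi_n=\sqrt{r}\,u_n$, has no zeros on $(0,\infty)$ for every $k$ with non-negative imaginary part; in particular $H_n(kr)\neq0$ for $r>b$. Thus $\phi_n(r,k)$ is, on the open upper half-plane, a quotient of functions analytic in $k$ whose denominator never vanishes, and is therefore analytic. I expect the main obstacle to be precisely this non-vanishing of $\psi_n$: the analyticity of the numerator and denominator is a routine consequence of parameter-dependence for linear ODEs, but without Proposition~\ref{prop_imped} one could not exclude poles of $\phi_n$ at zeros of $\psi_n$---indeed, as noted in the remark following Definition~\ref{def_imped}, such zeros do arise for the inward impedance. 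The analyticity of the Hankel boundary data is a secondary technical point, handled by restricting to the open upper half-plane, where the branch point at the origin plays no role.
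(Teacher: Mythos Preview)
Your proposal is correct and follows essentially the same approach as the paper: invoke Proposition~\ref{prop_imped} to rule out zeros of $\psi_n$, then appeal to the standard analytic dependence of linear ODE solutions on a parameter. The paper's proof is a two-sentence sketch citing Coddington--Levinson for the parameter dependence, whereas you spell out the ingredients---analyticity of the Hankel boundary data in the open upper half-plane, the Picard-series/Morera argument on compact $r$-intervals, and the explicit formula for $r>b$---but the logical skeleton is identical.
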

\begin{proof}
By Proposition \ref{prop_imped} the impedance is well-defined for all non-zero $k$ with non-negative imaginary part. Theorem \ref{thm:ana_dep} follows from the analytic dependence on parameters of solutions to ordinary differential equations (see, for example, \cite{codd}).
\end{proof}

The following theorem describes the behaviour of the impedance in the vicinity of $k =0.$
\begin{theorem}\label{thm:near_k0}
Let $0<a<b<\infty,$ and $n$ be any non-negative integer. Then
\begin{align}
\phi_n(r,k) =  \frac{(\sqrt{r}H_n(kr))'}{ik\sqrt{r} H_n(kr)} +O(k),
\end{align}
as $k \to 0$ in the complex upper half-plane (including the real axis).
\end{theorem}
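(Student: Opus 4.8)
The plan is to show that the deviation of $\phi_n$ from its source-free counterpart is $O(k)$. Write
\[
\Lambda_n(x,k) := \frac{(\sqrt{x}\,H_n(kx))'}{ik\,\sqrt{x}\,H_n(kx)},
\]
so that, by Corollary \ref{cor:ricat_eqn}, the function $w(r,k) = \phi_n(r,k)-\Lambda_n(r,k)$ satisfies $w(b,k)=0$ together with the integral equation
\[
w(r,k) = ik\int_r^b\Big(w(x,k)\big(w(x,k)+2\Lambda_n(x,k)\big)+q(x)\Big)\,{\rm d}x,\qquad a\le r\le b.
\]
The theorem is precisely the assertion that $w(r,k)=O(k)$ as $k\to 0$, uniformly in $r\in[a,b]$. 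This has exactly the form treated by the Gronwall-type estimate of Lemma \ref{lem:perov}, so first I would match notation: take $A=B=ik$, $f(x)=2\Lambda_n(x,k)$, $g(x)=q(x)$, $G=\sup_{[a,b]}|q|\le\max(|q_0|,|q_1|)$, and $F=2\sup_{x\in[a,b]}|\Lambda_n(x,k)|$.

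The crux is a two-sided estimate $c_1/|k|\le \sup_{x\in[a,b]}|\Lambda_n(x,k)|\le c_2/|k|$ for small $|k|$, with constants independent of $k$. A direct computation gives $ik\,\Lambda_n(x,k)=\tfrac{1}{2x}+k\,H_n'(kx)/H_n(kx)$. For the upper bound, once $|k|\le C_n/b$ so that $|kx|\le C_n$ on $[a,b]$, Remark \ref{rem:hnearzero} yields $|H_n'(kx)/H_n(kx)|\le 4(n+1)/|kx|$, whence $|\Lambda_n(x,k)|\le c_2/|k|$ and in particular $F<\infty$. For the lower bound I would pass to the rescaled quantity $ik\,\Lambda_n(x,k)$ and use the small-argument expansions (\ref{eqn:bess_exp})--(\ref{eqn:hank_exp}) to show
\[
\lim_{k\to 0} ik\,\Lambda_n(x,k)=\frac{1-2n}{2x},
\]
uniformly for $x\in[a,b]$ and uniformly in $\arg k\in[0,\pi]$. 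For $n\ge 1$ this uses $H_n(z)\sim C\,z^{-n}$, hence $k\,H_n'(kx)/H_n(kx)\to -n/x$; for $n=0$ the logarithmic singularity of $H_0$ forces $k\,H_0'(kx)/H_0(kx)\to 0$, leaving the geometric term $\tfrac1{2x}$. Since $1-2n\neq 0$ for every integer $n\ge 0$, the limit is bounded away from zero on $[a,b]$, giving $|\Lambda_n(x,k)|\ge c_1/|k|$ and hence $c_1/|k|\le \tfrac12 F$.

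With these bounds in hand, verifying the hypothesis of Lemma \ref{lem:perov} is routine: as $k\to 0$ one has $\tfrac{F}{|B|G}\ge \tfrac{2c_1}{G|k|^2}\to\infty$, while $\tfrac{|A|}{F}(b-a)\le \tfrac{(b-a)|k|^2}{2c_1}\to 0$, so for $|k|$ small enough
\[
\Big(1+\frac{F}{|B|G}\Big) > \frac{1}{2}\,e^{\frac{|A|}{F}(b-a)} .
\]
The lemma then gives $|w(r,k)|\le \tfrac{4}{F}\le \tfrac{2|k|}{c_1}=O(k)$, uniformly in $r\in[a,b]$ and in the argument of $k$, which is the claim for $r\in[a,b]$. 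The remaining ranges of $r$ are easy: for $r\ge b$ one has $\psi_n(r)=\sqrt{r}\,H_n(kr)$ exactly, so $w\equiv 0$; for $0<r<a$, where $q\equiv 0$, the same integral equation with vanishing source and ``initial'' datum $|w(a,k)|=O(k)$ gives $w(r,k)=O(k)$ by a second application of the Gronwall estimate.

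\textbf{Main obstacle.} I expect the delicate step to be the uniform lower bound $|\Lambda_n(x,k)|\ge c_1/|k|$: one must confirm that the geometric term $\tfrac1{2ikx}$ and the singular Hankel log-derivative do not cancel, which requires separating the power-law case $n\ge 1$ from the logarithmic case $n=0$ and checking that the combined leading coefficient $\tfrac{1-2n}{2}$ never vanishes, with all estimates uniform as $k\to 0$ throughout the closed upper half-plane. Everything downstream (the upper bound from Remark \ref{rem:hnearzero} and the application of Lemma \ref{lem:perov}) is then mechanical.
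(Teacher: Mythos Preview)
Your proposal is correct and follows the paper's argument essentially verbatim: set up the integral equation for $w$ via Corollary~\ref{cor:ricat_eqn}, bound the Hankel log-derivative through Remark~\ref{rem:hnearzero}, and invoke Lemma~\ref{lem:perov}. Your ``main obstacle'' is avoidable, however: since Lemma~\ref{lem:perov} only requires $F$ to be \emph{some} upper bound on $|f|$ (not the supremum), the paper simply takes $F=8(n+2)/(|k|a)$---the explicit bound furnished by Remark~\ref{rem:hnearzero}---which is already of exact order $1/|k|$, so $4/F=|k|a/(2(n+2))=O(k)$ immediately, and no lower bound on $|\Lambda_n|$ is needed.
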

\begin{proof}
Let $n$ be a non-negative integer and $k$ a non-zero complex number with non-negative imaginary part. Consider the function $w:[a,b]\times \{k \neq 0 \in \mathbb{C} : \,{\rm Im}\,k \ge 0\} \to \mathbb{C}$ defined by
\begin{align}\label{eqn:wdeff2}
w(r,k) = \phi_n(r,k)-\frac{(\sqrt{r}H_n(kr))'}{ik \sqrt{r} H_n(kr)}.
\end{align}
We begin by observing that by Corollary \ref{cor:ricat_eqn}, $w$ satisfies the integral equation
\begin{align}\label{eqn:wint}
w(r,k) =  ik \int_r^b \left(w(x,k) \left(w(x,k)+2\frac{(\sqrt{r}H_n(kx))'}{ik \sqrt{r} H_n(kx)}\right) +q(x)\right) \,{\rm d}x,
\end{align}
for all $a \le r \le b.$ Next we note that by Remark \ref{rem:hnearzero}, if $|k| < C_n/b$ then
\begin{align}\label{eqn:whbnd}
\left| \frac{(\sqrt{r}H_n(kr))'}{ik \sqrt{r} H_n(kr)}\right| &\le \frac{4(n+1)}{|k|a}+\frac{1}{2|k|a}\nonumber\\
& \le \frac{4(n+2)}{|k|a}
\end{align}
for all $a\le r \le b.$ Applying Lemma \ref{lem:perov} to the integral equation (\ref{eqn:wint}) with $|A| = |B| = |k|,$ $F = 8(n+2)/(|k|a)$ and $G = |q_0|+|q_1|,$ and using the bound (\ref{eqn:whbnd}) we obtain
\begin{align}\label{eqn:wbnd}
|w(r)| \le \frac{|k|a}{2(n+2)}
\end{align}
for all $a \le r \le b$ provided that
\begin{align}\label{eqn:prelim_bnd}
1+\frac{8(n+2)}{|k|^2a(|q_0|+|q_1|)} \ge \frac{1}{2} e^{\frac{a|k|^2}{8(n+2)}(b-a)}
\end{align}
and $|k| \le C_n/b.$ Substituting $|k| < C_n/b$ into the right-hand side of (\ref{eqn:prelim_bnd}) and rearranging yields
\begin{align}\label{eqn:wcond}
|k| \le \min\left\{ \frac{C_n}{b} ,\sqrt{\frac{16(n+2)}{a(|q_0|+|q_1|)}}\sqrt{\frac{1}{| e^{{C_n^2}/{(8(n+2))}}-2|}} \right\}.
\end{align}
Combining (\ref{eqn:wbnd}), (\ref{eqn:wcond}) and the definition of $w$ in (\ref{eqn:wdeff2}) we see that for all $r \in [a,b],$
\begin{align}
\phi_n(r,k) = \frac{(\sqrt{r}H_n(kr))'}{ik\sqrt{r} H_n(kr)} + O(k)
\end{align}
as $k \to 0$ in the complex upper half-plane (including the real axis).
\end{proof}

The following theorem describes the behaviour of the impedance at large frequencies.
\begin{theorem}\label{thm:WKB_form}
Suppose $q \in C^2(\mathbb{R})$ is a compactly supported function on the interval $[a,b].$ Moreover, suppose that there exist constants $q_0$ and $q_1$ such that $-1<q_0\le q(r) \le  q_1<\infty$ for all $r \in [a,b].$ Let $\phi_n$ be the impedance defined in (\ref{def_phin}). Then
\begin{align}\label{eqn:WKB_fin}
\phi_n(r,k) = \sqrt{1+q(r)}-\frac{1}{4ik}\frac{q'(r)}{1+q(r)}+O\left(\frac{1}{k^2}\right),
\end{align}
as $k\to \infty,$ $\Im (k) \ge 0.$
\end{theorem}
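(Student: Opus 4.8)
The plan is to recognize \eqref{eqn:WKB_fin} as a Liouville--Green (WKB) asymptotic and to attack it in two stages: first a formal matched expansion to discover the two explicit terms, and then a rigorous remainder bound obtained by reducing the underlying linear equation \eqref{eqn:psim_def} to the normal form already analyzed in Lemma \ref{lem:wkb_lem}.

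\emph{Stage 1 (formal expansion).} I would insert the ansatz
\[
\phi_n(r,k) = \phi^{(0)}(r) + \frac{1}{k}\phi^{(1)}(r) + O\!\left(\frac{1}{k^2}\right)
\]
into the Riccati equation of Lemma \ref{lem:ricat_eqn} and equate like powers of $k$. The $O(k)$ terms force the eikonal relation $(\phi^{(0)})^2 = 1+q$, whose two roots correspond to the outgoing and incoming branches; the outgoing boundary condition at $r=b$ (where $H_n(kb)\sim e^{ikb}$) selects $\phi^{(0)} = \sqrt{1+q}$. The $O(1)$ terms then give the linear relation $(\phi^{(0)})' + 2i\phi^{(0)}\phi^{(1)} = 0$, whence $\phi^{(1)} = iq'/\big(4(1+q)\big)$, i.e. $\tfrac1k\phi^{(1)} = -\tfrac{1}{4ik}\tfrac{q'}{1+q}$. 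This reproduces \eqref{eqn:WKB_fin} and identifies exactly what must be proved; it remains to justify the $O(1/k^2)$ remainder.

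\emph{Stage 2 (rigorous reduction).} I would apply the Liouville--Green change of variables to \eqref{eqn:psim_def}: introduce the travel-time coordinate $t = \int_r^b \sqrt{1+q(s)}\,ds$ and the normalized amplitude $W$ defined by $\psi_n = (1+q)^{-1/4}\,W$. A direct computation converts \eqref{eqn:psim_def} into the normal form $\ddot W + k^2 W = \eta\, W$, where the potential $\eta$ collects the centrifugal term $\tfrac{n^2-1/4}{r^2(1+q)}$ together with the Schwarzian-type contributions built from $q,q',q''$. Factoring out the outgoing phase through $W = e^{-ikt} w$ turns this into $w'' - 2ik\,w' = \eta\,w$, which is precisely the initial value problem of Lemma \ref{lem:wkb_lem}; that lemma yields $w = 1 + O(1/k)$ and $w' = O(1/k)$ uniformly. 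Finally I would translate back via the chain rule,
\[
\phi_n = \frac{\psi_n'}{ik\,\psi_n} = \frac{1}{ik}\left(-\frac14\frac{q'}{1+q} - \sqrt{1+q}\,\frac{\dot W}{W}\right), \qquad \frac{\dot W}{W} = -ik + \frac{\dot w}{w} = -ik + O\!\left(\frac1k\right),
\]
which collapses to \eqref{eqn:WKB_fin} with an $O(1/k^2)$ error.

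I expect two points to carry the real content of the argument. The first is verifying that the Liouville--Green potential $\eta$ is genuinely bounded and regular enough to invoke Lemma \ref{lem:wkb_lem}; this is exactly where the $C^2$ hypothesis on $q$ is indispensable, since $\eta$ depends on $q''$. The second and more delicate point is the matching of boundary data: Lemma \ref{lem:wkb_lem} is stated with the exact initial data $w(0)=1,\ w'(0)=0$, whereas the true outgoing solution only satisfies $w(0)=1$ together with $\dot w(0) = O(1/k)$. To close this gap I would first use Corollary \ref{cor:hank_wkb} to show $\phi_n(b,k) = 1 + O(1/k^2)$, which simultaneously confirms the branch selection and certifies that $\dot w(0)$ is an $O(1/k)$ perturbation of the lemma's initial condition. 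A routine perturbation of the estimate in Lemma \ref{lem:wkb_lem} (equivalently, expressing the outgoing solution in the WKB basis spanned by the two exponential branches and noting that the $e^{+ikt}$ component has coefficient $O(1/k)$) then shows this perturbation does not affect the bound $w' = O(1/k)$, which is all the back-substitution requires.
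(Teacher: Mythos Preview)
Your approach is essentially the paper's: the same Liouville--Green change of variables $t=\int_r^b\sqrt{1+q}$, the same amplitude normalization $\psi_n=(1+q)^{-1/4}W$, and the same appeal to Lemma~\ref{lem:wkb_lem} after peeling off the outgoing phase. The one substantive difference is how the boundary mismatch at $r=b$ is handled. You factor out $e^{-ikt}$ and then argue by perturbation that $w'(0)=O(1/k)$ does not spoil the lemma; the paper instead factors out $e^{-i\tilde k t}$ with
\[
\tilde k \;=\; k\left(\frac{H_n'(kb)}{H_n(kb)}+\frac{1}{2kb}\right),
\]
which is engineered so that $\psi_n'(b)/\psi_n(b)=i\tilde k$ exactly, forcing $w(0)=1$, $w'(0)=0$ on the nose. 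The price is that $\tilde k-k=O(1/k)$ contributes an extra bounded term $\tilde k^2-k^2=O(1)$ to $\eta$ and that $\tilde k$ may acquire a small negative imaginary part, both of which are already accommodated by the hypotheses of Lemma~\ref{lem:wkb_lem}. Your perturbation route is valid, but the $\tilde k$ device is worth noting: it converts an approximate initial condition into an exact one and lets the lemma apply verbatim.
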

\begin{proof}
The proof is a slight modification of the standard analysis of the WKB
approximation applied to equation (\ref{eqn:psim_def}) (see
\cite{yuchen,fedor} for example). Indeed, let $s(r) = \sqrt{1+q(r)}$ and
define $t:[a,b] \to [0,\infty)$ by
\begin{align}
t(r) = \int_r^b s(\tau)\,{\rm d}\tau,
\end{align}
observing that 
\begin{align}
t(a) \le \sqrt{1+q_1} (b-a).
\end{align}
We set $T = t(a)$ and define $\tilde{k} \in \mathbb{C}$ by
\begin{equation}
\tilde{k} = k\left(\frac{H_n'(kb)}{H_n(kb)}+\frac{1}{2kb} \right).
\end{equation}
It follows from Corollary \ref{cor:hank_wkb} that $|\tilde{k} -k| = O(|k|^{-1})$ as $|k|\to \infty$ anywhere in the upper half-plane.

Next we define $w(t,k)$ implicitly by
\begin{align}\label{eqn:w_defn}
\psi_n(r(t),k) = {\sqrt{kb}}H_n(kb)\,\frac{e^{-i\tilde{k}t}w(t,k)}{\sqrt{s(r(t))}}.
\end{align}
For notational convenience in the following we will suppress the dependence of $w$ on $k$ and write $w(t)$ in place of $w(t,k).$


After inserting (\ref{eqn:w_defn}) into (\ref{eqn:psim_def}), clearly $w$ satisfies the following initial value problem
\begin{align}\label{eqn:w_eqn}
&w''(t)-2i\tilde{k} w'(t) = \eta(t) w(t),\\
&w(0) = 1,\nonumber\\
&w'(0) = 0,\nonumber
\end{align}
where
\begin{align}
\eta(t) =  \left(\tilde{k}^2-k^2 +\frac{n^2-\frac{1}{4}}{r^2(t)\,s^2(r(t))}+  \frac{s''(r(t))}{2s^2(r(t))}-\frac{3 (s'(r(t)))^2}{4 s^3(r(t))}\right).
\end{align}
We note that $\tilde{k}^2-k^2 = O(1)$ as $k\to \infty$ in the upper half-plane and thus that $\eta$ is an absolutely continuous function on $[0,T]$ and is bounded uniformly in $k$ and $t$ for all $\Im{k} \ge 0, |k|>1$ and $t\in [0,T].$ Moreover, since $k- \tilde{k} = O(|k|^{-1}),$ there exists some constant $M$ such that ${\rm Im}\, \tilde{k} \ge -M$ for all $k \in \mathbb{C}$ such that $\Im{k} \ge 0, |k|>1.$ 

Applying Lemma \ref{lem:wkb_lem} to the initial value problem (\ref{eqn:w_eqn}) we obtain
\begin{align}
\left|w(t) -1 +\frac{1}{2i\tilde{k}}\int_0^t  \eta(\tau) \,{\rm d}\tau \right| &= O(|\tilde{k}|^{-2}),\\
\left| w'(t) \right| &= O(|\tilde{k}|^{-1}),
\end{align}
for all $t \in [0,T].$

Finally, it follows from the definition of $w$, see equation (\ref{eqn:w_defn}), that
\begin{align}
\frac{\psi_n'(r)}{\psi_n(r)} = i\tilde{k} \sqrt{1+q(r)}+\frac{s'(r)}{2s(r)} + O(|\tilde{k}|^{-1})
\end{align}
as $\tilde{k} \to \infty$ anywhere in the upper half-plane.
\end{proof}

\section{The trace formula}
In this section we present a trace formula for the impedance which is the principal analytic tool used in the inversion algorithm.
\begin{theorem}\label{thm:trac_form}
Suppose that $q \in C^2(\mathbb{R})$ is a compactly supported function on the interval $[a,b]$ and that there exist constants $q_0$ and $q_1$ such that $-1<q_0\le q(r) \le  q_1<\infty$ for all $r \in [a,b].$ Let $\phi_n$ be the impedance defined in Definition \ref{def_imped}. Then
\begin{align}\label{eqn:trace_formula}
\frac{q'(r)}{1+q(r)} = \frac{4}{\pi}\int_{-\infty}^\infty \left(\phi_n(r,k)-\frac{(\sqrt{r}H_n(kr))'}{ik H_n(kr)}+1-\sqrt{1+q(r)} \right) \,{\rm d}k.
\end{align}
\end{theorem}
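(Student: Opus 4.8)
The plan is to evaluate the integral on the right-hand side of (\ref{eqn:trace_formula}) by contour integration in the complex $k$-plane, using the analyticity of $\phi_n$ in the upper half-plane (Theorem \ref{thm:ana_dep}) together with the small-$k$ and large-$k$ asymptotics from Theorems \ref{thm:near_k0} and \ref{thm:WKB_form}. Fix $r \in (a,b)$ and abbreviate
\[ \Phi(r,k) = \frac{(\sqrt{r}H_n(kr))'}{ik\sqrt{r}H_n(kr)}, \qquad g(r,k) = \phi_n(r,k) - \Phi(r,k) + 1 - \sqrt{1+q(r)}, \]
so that $g$ is the integrand of (\ref{eqn:trace_formula}) and $\phi_n - \Phi$ is precisely the function $w$ of Corollary \ref{cor:ricat_eqn}. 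The first observation is that $g$ is analytic in the open upper half-plane: $\phi_n$ is analytic there by Theorem \ref{thm:ana_dep}, while $\Phi$ is analytic since $H_n(kr)$ has its branch cut in the lower half-plane and, by Proposition \ref{prop_imped} (applied for $r > b$, where $\psi_n = \sqrt{r}H_n(kr)$, and varying $k$), does not vanish for $\Im k \ge 0$.

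Next I would pin down the behaviour of $g$ at the two ends of the contour. As $k \to \infty$ with $\Im k \ge 0$, a direct computation gives $\Phi(r,k) = \tfrac{1}{2ikr} + \tfrac{1}{i}\,H_n'(kr)/H_n(kr)$, and the large-argument expansion of $H_n$ yields $H_n'(kr)/H_n(kr) = i - \tfrac{1}{2kr} + O(k^{-2})$, so the $1/k$ terms cancel and $\Phi(r,k) = 1 + O(k^{-2})$. Combining this with the WKB expansion of Theorem \ref{thm:WKB_form},
\[ g(r,k) = \Big(\sqrt{1+q(r)} - \frac{1}{4ik}\frac{q'(r)}{1+q(r)}\Big) - 1 + 1 - \sqrt{1+q(r)} + O(k^{-2}) = \frac{c(r)}{k} + O(k^{-2}), \]
with $c(r) = -\tfrac{1}{4i}\,q'(r)/(1+q(r))$, uniformly as $|k| \to \infty$ in the closed upper half-plane. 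At the other end, Theorem \ref{thm:near_k0} gives $\phi_n - \Phi = O(k)$, so $g(r,k) \to 1 - \sqrt{1+q(r)}$ is bounded as $k \to 0$; thus the apparent singularities of $\phi_n$ and $\Phi$ at the origin cancel and $g$ is integrable across $k = 0$. Finally, Lemma \ref{lem:phi_sym} gives $g(r,-k) = \overline{g(r,k)}$ for real $k$, which shows that $c(r)/k$ is purely imaginary on the real axis, so $\Re g = O(k^{-2})$ and the symmetric integral $\int_{-\infty}^\infty g\,dk = \int_0^\infty 2\,\Re g(r,k)\,dk$ converges.

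I would then close the contour in the upper half-plane. Integrate $g$ over the boundary of the region $\{|k| < R,\ \Im k > 0\}$ indented by a small semicircle of radius $\eps$ about the origin; since $g$ is analytic inside and continuous up to the boundary, Cauchy's theorem makes the closed integral vanish. Letting $\eps \to 0$, the indentation contributes $O(\eps) \to 0$ because $g$ is bounded near $0$, leaving
\[ \int_{-R}^{R} g(r,k)\,dk + \int_{\Gamma_R} g(r,k)\,dk = 0, \]
where $\Gamma_R$ is the large semicircle traversed from $R$ to $-R$. Writing $k = Re^{i\theta}$ on $\Gamma_R$, the $c/k$ term integrates to $i\pi c(r)$ while the $O(k^{-2})$ remainder contributes $O(R^{-1})$, so $\int_{\Gamma_R} g \to i\pi c(r)$. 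Hence $\int_{-\infty}^\infty g\,dk = -i\pi c(r) = \tfrac{\pi}{4}\,q'(r)/(1+q(r))$, and multiplying by $4/\pi$ yields (\ref{eqn:trace_formula}).

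The main obstacle is the second step: establishing the expansion $g = c/k + O(k^{-2})$ \emph{uniformly} over the whole closed upper half-plane (uniformly in $\arg k \in [0,\pi]$), rather than along individual rays, since this uniformity is exactly what guarantees that the large semicircle contributes precisely $i\pi c(r)$ and that the remainder vanishes. This requires checking that the error terms in the WKB estimate of Theorem \ref{thm:WKB_form} and in the Hankel asymptotics are uniform as $\Im k$ ranges over $[0,\infty)$, including the transition between the oscillatory regime near the real axis and the exponentially growing regime near the imaginary axis; controlling $\tilde k - k$ and the associated error bounds uniformly there is the delicate point. A secondary technical point is justifying the cancellation of the $k = 0$ singularities rigorously via Theorem \ref{thm:near_k0} so that the contour may be indented harmlessly at the origin.
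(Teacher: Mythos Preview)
Your proposal is correct and follows essentially the same contour-integration argument as the paper: define the integrand, invoke analyticity in the upper half-plane, replace the segment $[-\Omega,\Omega]$ by the large semicircle via Cauchy's theorem, and read off the $1/k$ coefficient from the WKB expansion of Theorem~\ref{thm:WKB_form}. Your stated ``main obstacle'' about uniformity in $\arg k$ is already handled by the hypotheses of Theorem~\ref{thm:WKB_form} (and Corollary~\ref{cor:hank_wkb}), which assert the asymptotics for all $k$ with $\Im k\ge 0$; the indentation at $k=0$, which you justify via Theorem~\ref{thm:near_k0}, is a detail the paper leaves implicit.
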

\begin{proof}
For $a \le r \le b$ define the function $f_r : \{k \neq 0 \in \mathbb{C}\, : \, {\rm Im} \,k \ge 0\} \to \mathbb{C}$ by
\begin{align}
f_r(k) = \phi_n(r,k)-\frac{(\sqrt{r}H_n(kr))'}{ik H_n(kr)}+1-\sqrt{1+q(r)}.
\end{align}
By Theorem \ref{thm:ana_dep}, for all $a\le r \le b$ the function $f_r$ is analytic in the upper half-plane and hence if $\Omega$ is any positive real number then
\begin{align}
\int_{-\Omega}^\Omega f_r(k)\,{\rm d}k = -i\Omega\int_{0}^\pi f_r(e^{i\theta}\Omega) e^{i\theta}\,{\rm d}\theta.
\end{align}
Substituting the asymptotic expansion of $\phi_n$  from equation (\ref{eqn:WKB_fin}) into the previous expression yields
\begin{align}
\int_{-\Omega}^\Omega f_r(k)\,{\rm d}k ={i\Omega}\int_0^\pi \frac{1}{4i \Omega e^{i\theta}} \frac{q'(r)}{1+q(r)}e^{i\theta}\,{\rm d}\theta + O\left( \frac{1}{\Omega}\right).
\end{align}
Taking the limit as $\Omega \to \infty$ completes the proof.
\end{proof}

The following corollary is an immediate consequence of Theorem \ref{thm:trac_form} and Lemma \ref{lem:ricat_eqn}, and is the basis for the reconstruction algorithm described in Section \ref{sec:rec_algo}.
\begin{corollary}
Suppose that $q \in C^2(\mathbb{R})$ is a compactly supported function on
the interval $[a,b]$ and that there exist constants $q_0$ and $q_1$ such
that $-1<q_0\le q(r) \le  q_1<\infty$ for all $r \in [a,b].$ Then $\phi_n$
and $q$ satisfy the following system of integro-differential equations
\begin{align}\label{eqn:trace_syst}
\phi_n'(r,k) &= -ik\phi_n^2(r,k)-ik (1+q(x)) +i\frac{n^2-\frac{1}{4}}{kr^2}, \quad k \in \mathbb{R}\\
\frac{q'(r)}{1+q(r)} &= \frac{4}{\pi}\int_{-\infty}^\infty \left(\phi_n(r,k)-\frac{(\sqrt{r}H_n(kr))'}{ik H_n(kr)}+1-\sqrt{1+q(r)} \right) \,{\rm d}k
\end{align}
for all $r \in [a,b]$ together with the initial conditions
\begin{align}
\phi_n(a,k) &= k\left(\frac{H_n'(ka)}{iH_n(ka)}+\frac{1}{2ika} \right),\quad\quad k \in \mathbb{R},\\
q(a) &= 0.
\end{align}
\end{corollary}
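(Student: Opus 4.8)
The plan is to assemble the system from results already in hand and then pin down the two initial conditions. The first equation is nothing more than the Riccati equation of Lemma \ref{lem:ricat_eqn}, which is asserted there for every non-zero $k$ with ${\rm Im}\,k \ge 0$; restricting that identity to $k \in \mathbb{R}$ reproduces the stated equation verbatim. The second equation is exactly the trace formula of Theorem \ref{thm:trac_form}. So no new analysis is needed for the two integro-differential equations themselves, and the corollary is, as advertised, an immediate consequence of combining the two cited results; the only genuine content lies in the initial conditions.

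For the condition $q(a)=0$ I would argue purely from the hypotheses on $q$: since $q$ is continuous on $\mathbb{R}$ and supported on $[a,b]$, it vanishes identically on $(-\infty,a)$, and continuity at the endpoint then forces $q(a)=0$ (and likewise $q(b)=0$).

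For the initial value of the impedance I would appeal to the explicit form of $\psi_n$ where the potential vanishes. Where $q\equiv 0$ the solution is a pure outgoing multiple of $\sqrt{r}H_n(kr)$ — this is the content of Remark \ref{lem_soln_char} together with the normalisation $\mu=1$ of Remark \ref{rem:scalemu} — so substituting $\psi_n(r)=c\sqrt{r}H_n(kr)$ into the definition $\phi_n=\psi_n'/(ik\psi_n)$ and using $(\sqrt{r}H_n(kr))' = \sqrt{r}H_n(kr)\bigl(\tfrac{1}{2r}+kH_n'(kr)/H_n(kr)\bigr)$ makes the constant $c$ cancel and yields the closed-form Hankel impedance. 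This is precisely the computation behind the boundary value $\phi_n(b,k)=H_n'(kb)/(iH_n(kb))+1/(2ibk)$ of Lemma \ref{lem:ricat_eqn}, now evaluated at the other endpoint.

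The step I expect to be the main obstacle is justifying the impedance initial condition at $r=a$. The clean outgoing form $\psi_n\propto\sqrt{r}H_n(kr)$ holds unambiguously for $r>b$, where the radiation condition fixes the solution; at the inner radius the extension of $\psi_n$ into $r<a$ generally carries a nonzero $J_n$ component, so the impedance at $a$ is not a priori the free-space value and in principle records the accumulated effect of $q$ on $[a,b]$. I would therefore verify carefully that the reconstruction is posed so that the free-space Hankel impedance is the correct seed for the Riccati flow at the endpoint in question, reconciling the stated expression with the explicit impedance computed above — in particular checking the placement of the factor $k$ and the choice of endpoint — and only then read off the two initial conditions.
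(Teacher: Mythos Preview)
Your reading of the two integro--differential equations is exactly what the paper intends: it offers no proof at all, stating only that the corollary ``is an immediate consequence of Theorem \ref{thm:trac_form} and Lemma \ref{lem:ricat_eqn},'' and your derivation of $q(a)=0$ from continuity and compact support is the natural one.

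Your unease about the impedance initial condition is well placed, and it is not merely an obstacle to your argument --- it points to a genuine defect in the statement. In the forward model the pure outgoing form $\psi_n \propto \sqrt{r}\,H_n(kr)$ holds for $r\ge b$, so Lemma~\ref{lem:ricat_eqn} supplies the free-space value $\phi_n(b,k)=H_n'(kb)/(iH_n(kb))+1/(2ibk)$ at the \emph{outer} endpoint. For $r\le a$ the extension of $\psi_n$ picks up a $J_n$ component (Remark~\ref{lem_soln_char}), and $\phi_n(a,k)$ is the scattering data that encodes $q$; indeed the reconstruction algorithm (Step~1) seeds the Riccati flow with the \emph{measured} impedance $\phi_n(a,f_j)$, not the Hankel expression. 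The displayed formula for $\phi_n(a,k)$ also carries a stray overall factor of $k$ that does not match Lemma~\ref{lem:ricat_eqn}. So the two equations and $q(a)=0$ are fine, but the stated $\phi_n(a,k)$ appears to be a misprint: either the free-space boundary value belongs at $r=b$ (with the extra $k$ removed), or the $r=a$ condition should simply be the given data. You cannot prove it as written, and you should not try.
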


\section{Numerical algorithm and results}

\subsection{The reconstruction algorithm}\label{sec:rec_algo}
In this section we describe a reconstruction algorithm based on the trace formula derived in Theorem \ref{thm:trac_form}. As input it takes a non-negative integer $n$, an interval $[a,b]$ with $0<a<b<\infty,$ a spatial step size $h,$ a bandlimit $\Omega,$ and the number of frequency samples $N$ to use. As output the algorithm produces an approximation to the potential $q$ on the interval $[a,b].$

\begin{enumerate}
\item[Step 1.] Initialization: For $j=1,\dots,N$ let $f_j = 2 \Omega (j-1)/(N-1)-\Omega$ and $w_1=\frac{\Omega}{N},$ $w_N = \frac{\Omega}{N}$ and $w_j = \frac{2 \Omega}{N},$ $j=2,\dots,N-1.$ We note that this corresponds to an $N$-point trapezoidal quadrature rule on the interval $[-\Omega,\Omega].$  Set $q_0 = q(a) = 0.$ Set $\phi_{0,j} =\phi_n(a,f_j),$ for $j=1,\dots,N.$
\end{enumerate}

{\noindent For $\ell=0,\dots, (b-a)/h-1$}
\begin{enumerate}
\item[Step 2.] Set $r_\ell = a+\ell h$ and obtain $q'(r_\ell)$ via the formula
\begin{align}\label{eqn:dereval}
q'(r_\ell) =\frac{\pi (1+q_\ell)}{4}
\sum_{j=1}^N\left(\phi_{\ell,j}-\frac{\sqrt{r_\ell}H_n'(f_j r_\ell)}{i H_n(f_j
r_\ell)}-\frac{1}{2i f_j \sqrt{r_\ell}}+1-\sqrt{1+q_\ell} \right) w_j,
\end{align}
and compute ${q}_{\ell+1}$ via the formula
\begin{align}
{q}_{\ell+1} = q_\ell + h q'(r_\ell).
\end{align}
\item[Step 3.] For $j=1,\dots,N$ set
$$\phi_{\ell+1,j} = \phi_{\ell,j} + h \left(-if_j \phi_{\ell,j}^2-if_j(1+q_\ell)+ i \frac{n^2-\frac{1}{4}}{f_jr_\ell^2}\right).$$
\end{enumerate}
\begin{remark}
The above algorithm is first-order in $1/\Omega$ and $h$ and $1/N.$ In the next section we discuss modifications which improve its rate of convergence with respect to these parameters.
\end{remark}

\subsection{Numerical acceleration of convergence }
The algorithm presented in the previous section is first-order in the bandlimit $\Omega,$ the number of frequency samples $N,$ and the spatial step size $h,$ and is suitable for situations in which a few digits of relative precision are required for the reconstructions. If higher-precision reconstructions are required then the number of samples, the bandlimit and the number of spatial discretization points can become prohibitively large. In this section we outline straightforward modifications to the above algorithm which increase the rate of its convergence with respect to $\Omega,$ $h,$ and $N.$

\subsubsection{Dependence on $N$}
As written the algorithm uses the trapezoidal rule to approximate the
integral appearing in the trace formula (\ref{eqn:trace_formula}) over a
truncated interval $[-\Omega,\Omega].$ Theorem  \ref{thm:near_k0} guarantees
that computing the integral over this interval using trapezoid rule will
result in an error that decays like $1/N$ where $N$ is the number of
frequencies used. For $k\in \mathbb{R}$ away from zero the integrand is
smooth and hence any smooth quadrature rule such as Gauss--Legendre
quadratures or nested Gauss--Legendre quadratures can be used to obtain
arbitrarily high accuracy. Near $k=0$ the presence of terms depending on
$\log(k)$ cause singularities in the higher derivatives of the integrand
which necessitate the use of a different quadrature rule. In particular,
using generalized Gaussian quadratures \cite{gaussquad} we produced a
$35$-point quadrature rule which integrates all functions of the form
$$f_{m,n}(k) = k^m \log^n(k)$$
on the interval $0\le k \le 1/2$ for $m=1,2,\dots,18$ and $n= -10,-9,\dots,4$ to a relative precision of $10^{-16}.$ The resulting quadrature rule can be used to perform the integrals in the neighborhood of $k=0.$ Alternatively, one could use an endpoint corrected trapezoid rule \cite{endpoint} to evaluate the contribution of the integral in the vicinity of the origin.

Using this quadrature method, for any $\Omega>0$ and $0<r<\infty,$ integrals of the form 
\begin{align}
\frac{4}{\pi}\int_{-\Omega}^\Omega \left(\phi_n(r,k)-\frac{(\sqrt{r}H_n(kr))'}{ik H_n(kr)}+1-\sqrt{1+q(r)} \right) \,{\rm d}k
\end{align}
can be computed numerically to full machine precision with relatively few quadrature nodes (typically no more than 500 and often significantly fewer).

\subsubsection{Dependence on $\Omega$}
The method outlined in the previous section allows one to compute integrals of the form 
\begin{align}
\frac{4}{\pi}\int_{-\Omega}^\Omega \left(\phi_n(r,k)-\frac{(\sqrt{r}H_n(kr))'}{ik H_n(kr)}+1-\sqrt{1+q(r)} \right) \,{\rm d}k
\end{align}
accurately and with relatively few quadrature nodes. It does not, however,
eliminate the truncation error introduced by replacing the integral over the
entire real line in the system (\ref{eqn:trace_syst}) by the integral over
the finite interval $[-\Omega,\Omega].$ From Theorem \ref{thm:WKB_form} it
can be observed that the resulting error due to this truncation will decay
like $1/\Omega.$ In this section we describe a modification to the inversion
algorithm described above which produces faster convergence in $\Omega.$ The
principal tool is Richardson extrapolation.

For notational convenience we denote the real part of the integrand appearing in the trace formula (\ref{eqn:trace_formula}) by $F(r,k),$ noting that for any $0<r<\infty,$ $F(r,k) = F(r,-k).$ Specifically, $F:(0,\infty)\times \mathbb{R} \to \mathbb{R}$ is defined via the formula
\begin{align}
F(r,k) =\left(\phi_n(r,k)-\frac{(\sqrt{r}H_n(kr))'}{ik H_n(kr)}+1-\sqrt{1+q(r)} \right). 
\end{align}
We observe that the imaginary part can be neglected since by Lemma \ref{lem:phi_sym} the integral of the imaginary part vanishes provided the endpoints of integration are symmetric about $k=0.$ Additionally, Theorem \ref{thm:trac_form} guarantees that $F(r,k) = O(k^{-2})$ for large $k.$ In fact, for any fixed $r$ it has an asymptotic expansion in $k$ valid in the limit as $k$ goes to infinity; namely, there exist coefficients $A_2(r),A_4(r),\dots$ depending on the potential $q$ and the point $r,$ such that
\begin{align}
F(r,k) = \frac{A_2(r)}{k^2}+\frac{A_4(r)}{k^4} + \dots + \frac{A_{2m}(r)}{k^{2m}} + O(k^{-2m-2})
\end{align}
for any $m\ge 1.$

Thus
\begin{align}
2 \int_{-2\Omega}^{2\Omega} F(r,k)\,{\rm d}k-\int_{-\Omega}^\Omega F(r,k)\,{\rm d}k = O(\Omega^{-3}).
\end{align}
Rather than compute both integrals, this extrapolation can be performed by adjusting the frequency quadrature weights $w_j,$ $j=1,\dots,N.$ In addition, this extrapolation can be performed multiple times, each time increasing the rate of convergence by a factor of $\Omega^{-2}.$ Finally, we remark that it is not necessary to double the bounds of integration for each step of Richardson extrapolation: smaller ratios can be used at the expense of increasing the coefficients multiplying the integrals.

\subsection{Dependence on $h$}
The recovery algorithm described in Section \ref{sec:rec_algo} uses the forward Euler method to evolve both the impedance $\phi_n$ and the potential $q$ from the inner radius of the annulus $a$ to the outer radius $b,$ which produces an error decaying linearly in the step size $h.$ If one instead uses Heun's method for the evolution of the potential $q$ followed by the Crank--Nicholson method to evolve the equations for the impedance $\phi_{n},$ the  result is a second-order method in $h.$ 

\begin{remark}
The above algorithm is second-order accurate in the step size $h$ both for the evolution of the impedance as well as for the evolution of the potential $q$; using Richardson extrapolation it is easy to obtain higher-order convergence in $h.$
\end{remark}

\subsection{Numerical results}
The algorithm described above, together with the modifications, was
implemented in Fortran and the results are summarized below. All code was
compiled in GFortran and run on a 2.7 GHz Apple laptop with 8 Gb of memory.
To avoid so-called {\it inverse crimes} the forward data was obtained by
solving the equation for the field $u_n$ given in equation (\ref{eqn:1d})
using a fourth-order Runge--Kutta method. We show both the effect of
increasing the order $n$ (Figure \ref{fig:highm_recovery}) as well as
changing the distance of the annulus from the origin (Figure
\ref{fig:far_recovery}). Finally, in Figure \ref{fig:square_pot} we show
recovery for a discontinuous potential.

\section{Conclusions and discussion}
In this paper, we present a procedure for solving inverse scattering
problems with radially-symmetric scattering potentials in the plane, given multifrequency impedance data.
The procedure is based on a new trace formula that relates the impedance of the field to
the scattering potential via an integro-differential equation, which can
then be solved to recover the scattering potential. 
Numerical results are included illustrating the accuracy and efficiency of
the method.

The approach of this paper extends directly to three-dimensional
radially-symmetric problems as well as waveguides with constant
cross-sectional parameters. 
Detailed analyses and numerical implementations in these cases will be
published at a later date.
The extension of this work to cases where the scattering potential is not
radially symmetric is currently being pursued. 

\vspace{1 cm}

J. H. and V. R. were both supported in part by ONR (grant no.\ N00014-14-1-0797) and AFOSR
(grant no.\ FA9550-16-1-0175).
V. R. was also supported in part by NSF (grant no.\ DMS-1952751).

\clearpage

\begin{figure}[p]
    \centering
    \begin{subfigure}[b]{0.45\textwidth}
        \includegraphics[width=\textwidth]{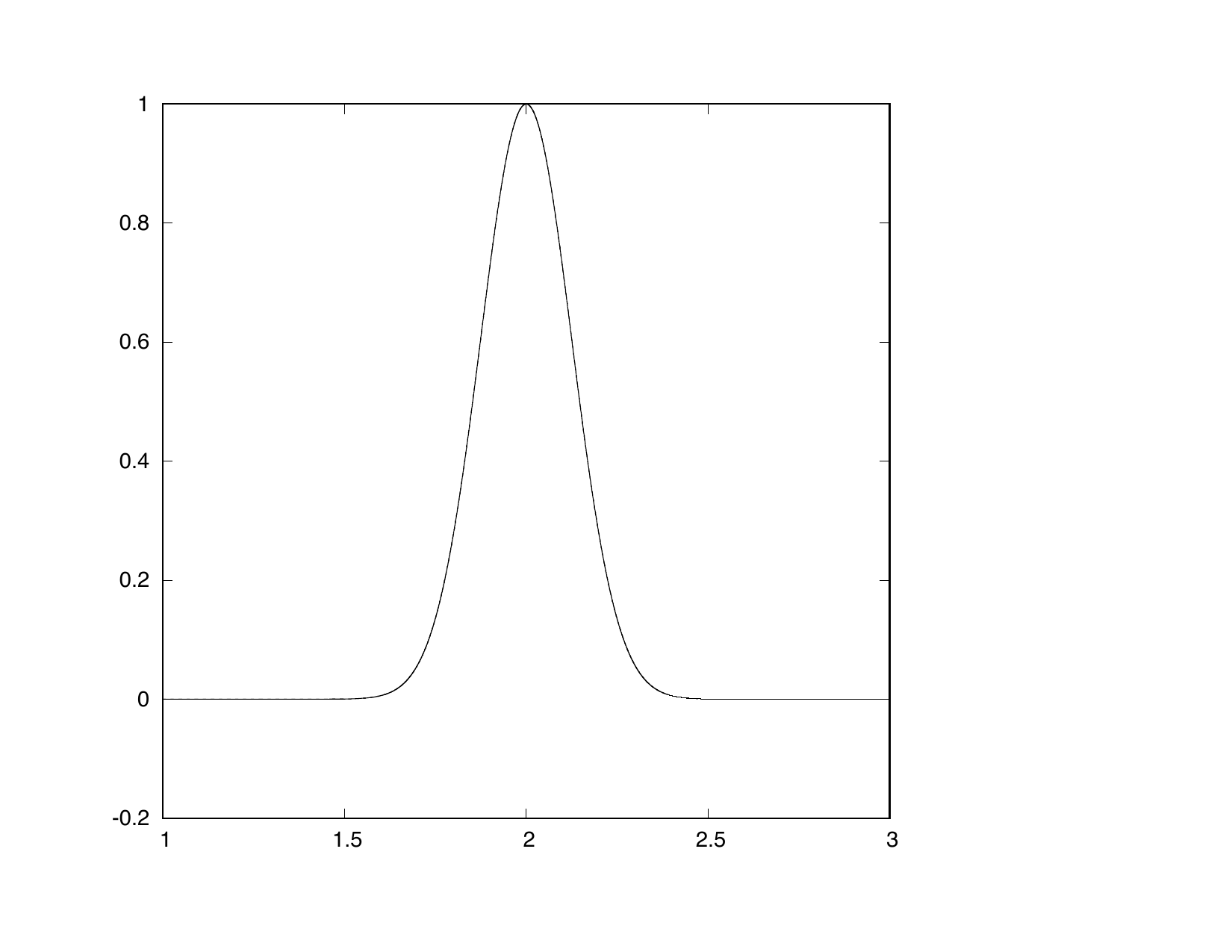}
        \caption{The exact and recovered\newline potential}
        \label{fig:bel_rec}
    \end{subfigure}
    \begin{subfigure}[b]{0.45\textwidth}
        \includegraphics[width=\textwidth]{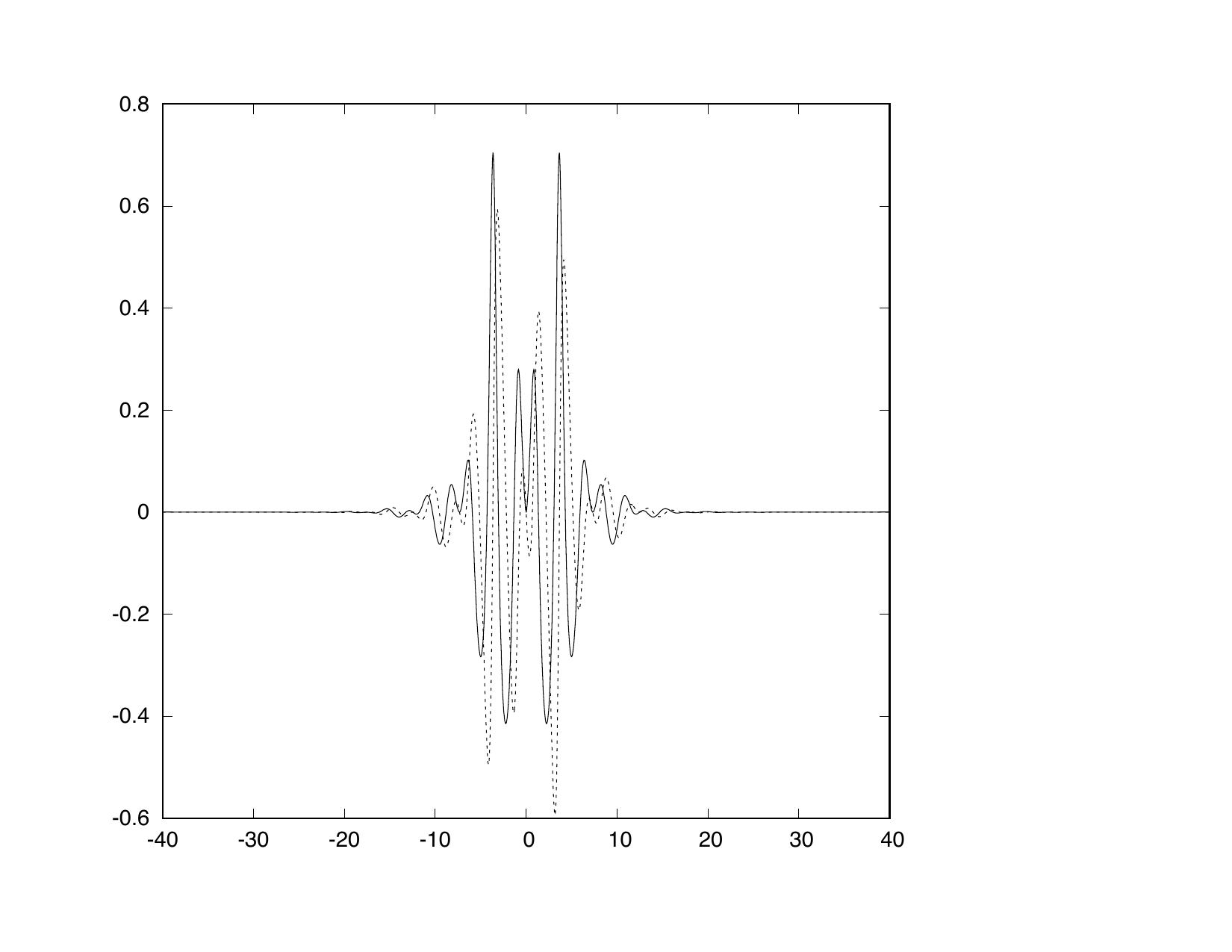}
        \caption{The initial data \newline $\phi_0 - (\sqrt{r} H_0(kr))'/(H_0(kr))$}
        \label{fig:bel_dat}
    \end{subfigure}
        \begin{subfigure}[b]{0.45\textwidth}
        \includegraphics[width=\textwidth]{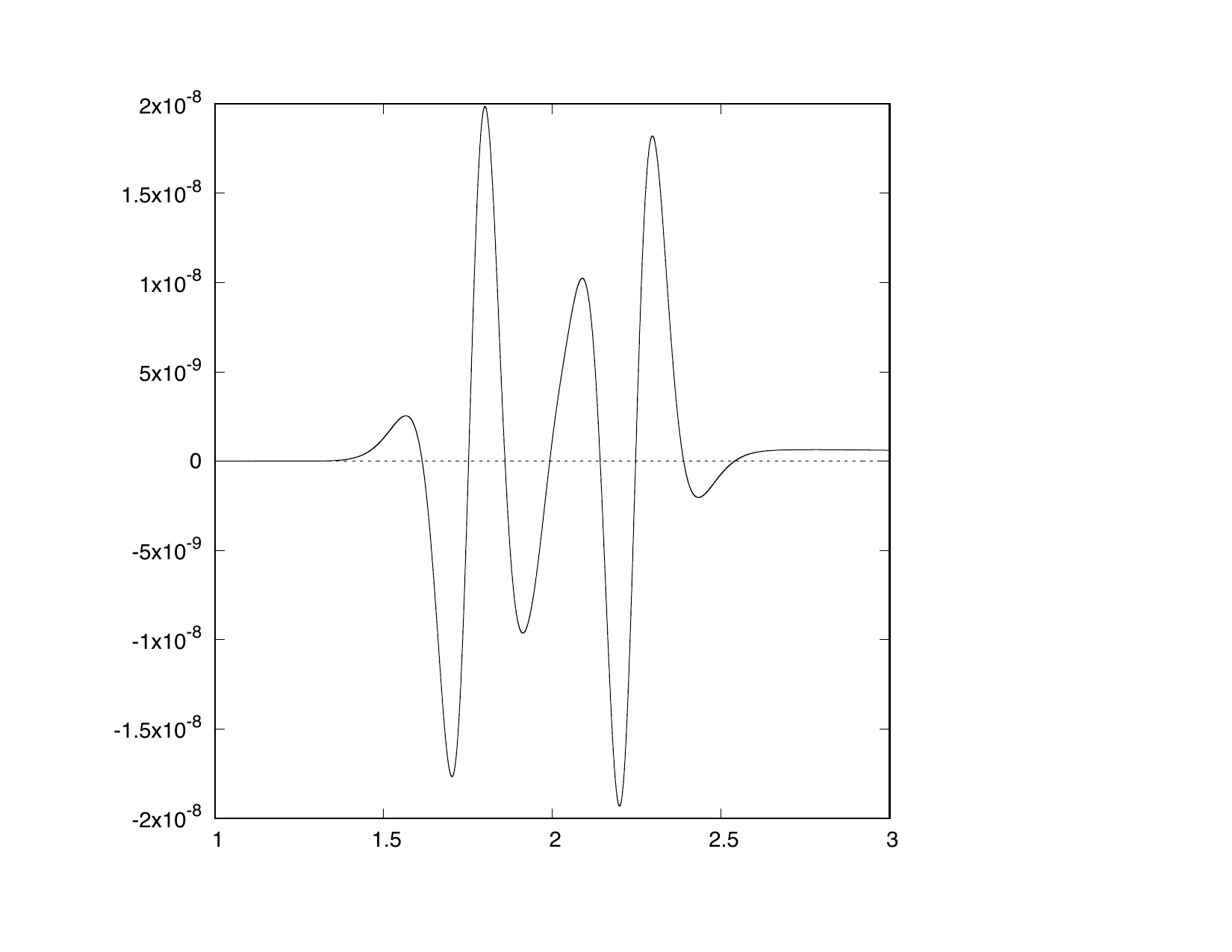}
        \caption{The recovery error}
        \label{fig:bel_err}
    \end{subfigure}
    \caption{Numerical results for a Gaussian bump $q(r) = e^{-32(x-2)^2}$ with $n=0.$ The time to generate the data was 54 seconds, and the time to solve was 140 seconds. The solve was done using $270$ frequencies in the range $[-160,160],$ and a spatial step size of $1/20 000.$}
\end{figure}

\begin{figure}[p]
    \centering
    \begin{subfigure}[b]{0.45\textwidth}
        \includegraphics[width=\textwidth]{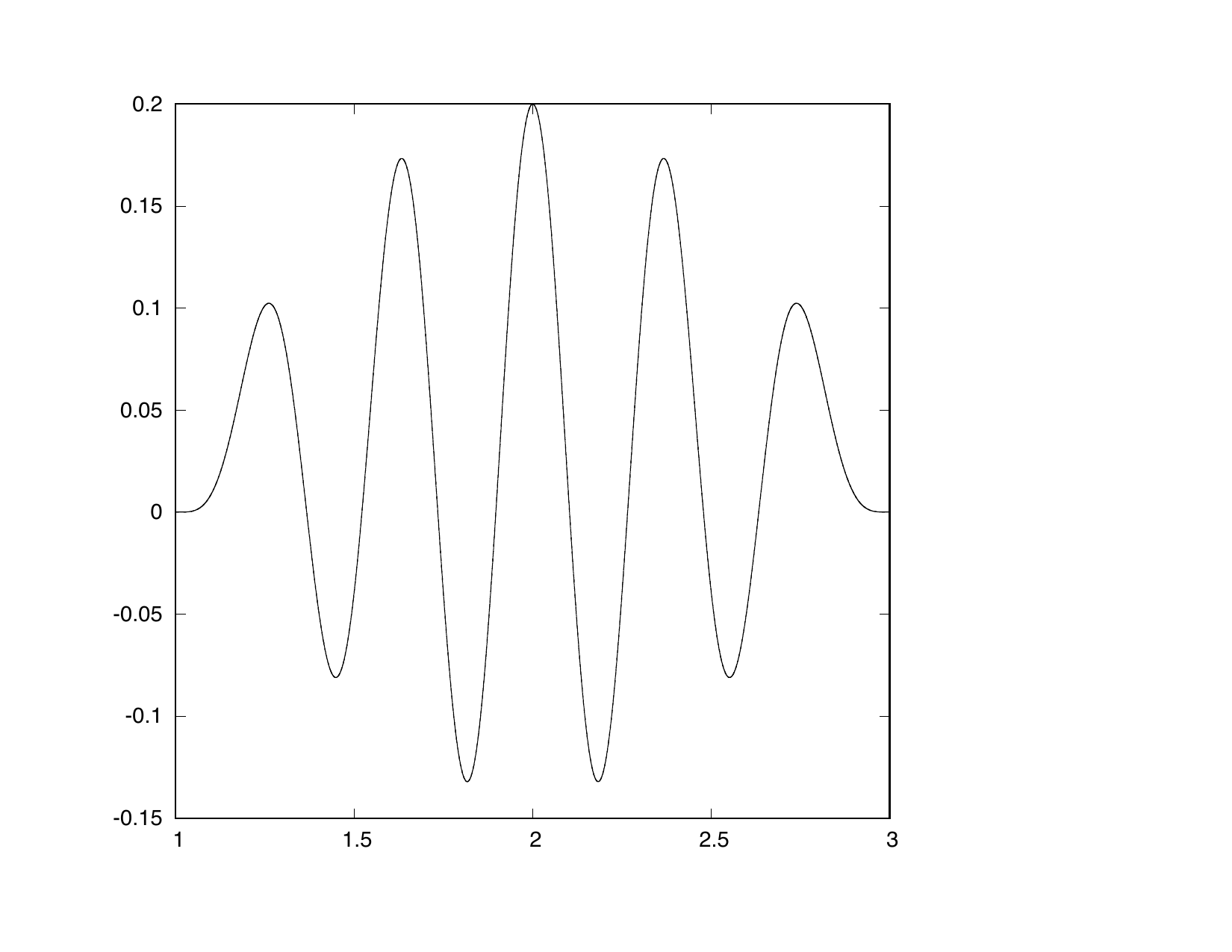}
        \caption{The exact and recovered\newline potential}
        \label{fig:sin_rec}
    \end{subfigure}
    \begin{subfigure}[b]{0.45\textwidth}
        \includegraphics[width=\textwidth]{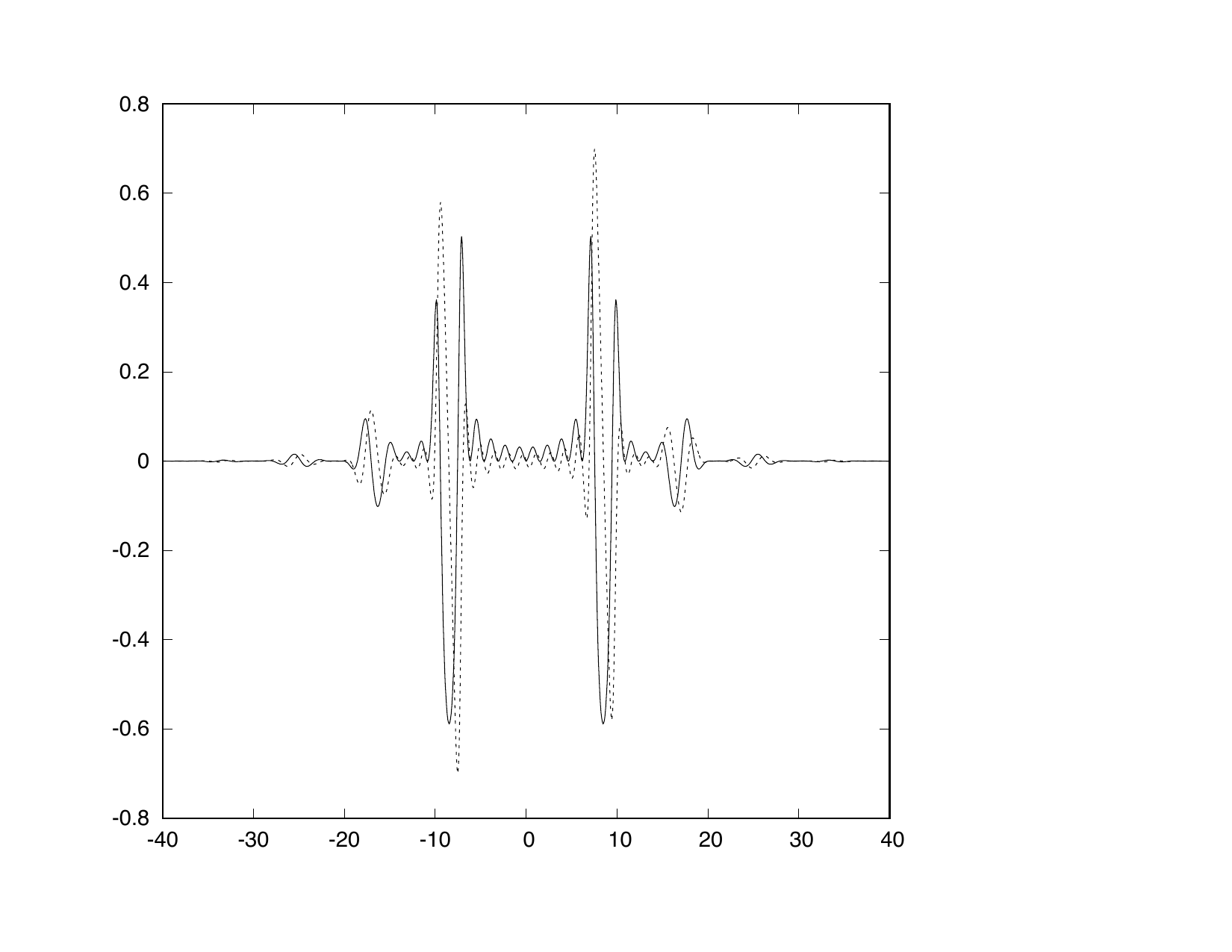}
        \caption{The initial data \newline $\phi_0 - (\sqrt{r} H_0(kr))'/(H_0(kr))$}
        \label{fig:sin_dat}
    \end{subfigure}
        \begin{subfigure}[b]{0.45\textwidth}
        \includegraphics[width=\textwidth]{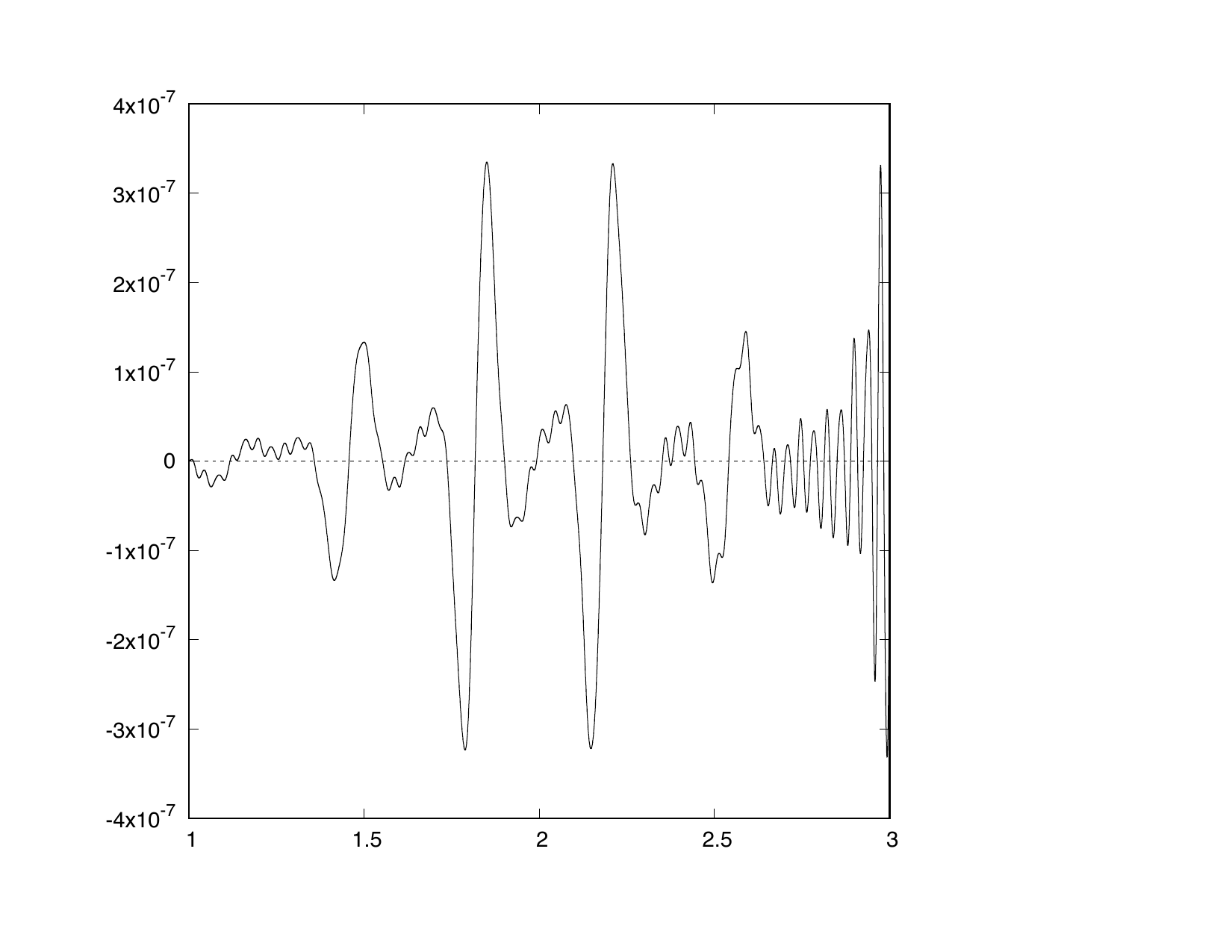}
        \caption{The recovery error}
        \label{fig:sin_err}
    \end{subfigure}
    \caption{Numerical results for the potential\\
   $q(r) = \frac{1}{10} \left[ \cos(a(r-2)\pi)+1\right]-\frac{a^2}{10 b^2} \left[1- \cos(b(r-2)\pi) \right] $ with $a=5,b=6$ and $n=0.$ The time to generate the data was 67 seconds, and the time to solve was 141 seconds. The solve was done using $270$ frequencies in the range $[-160,160]$ and a spatial step size of $1/20 000.$}
\end{figure}

\begin{figure}[p]
    \centering
    \begin{subfigure}[b]{0.45\textwidth}
        \includegraphics[width=\textwidth]{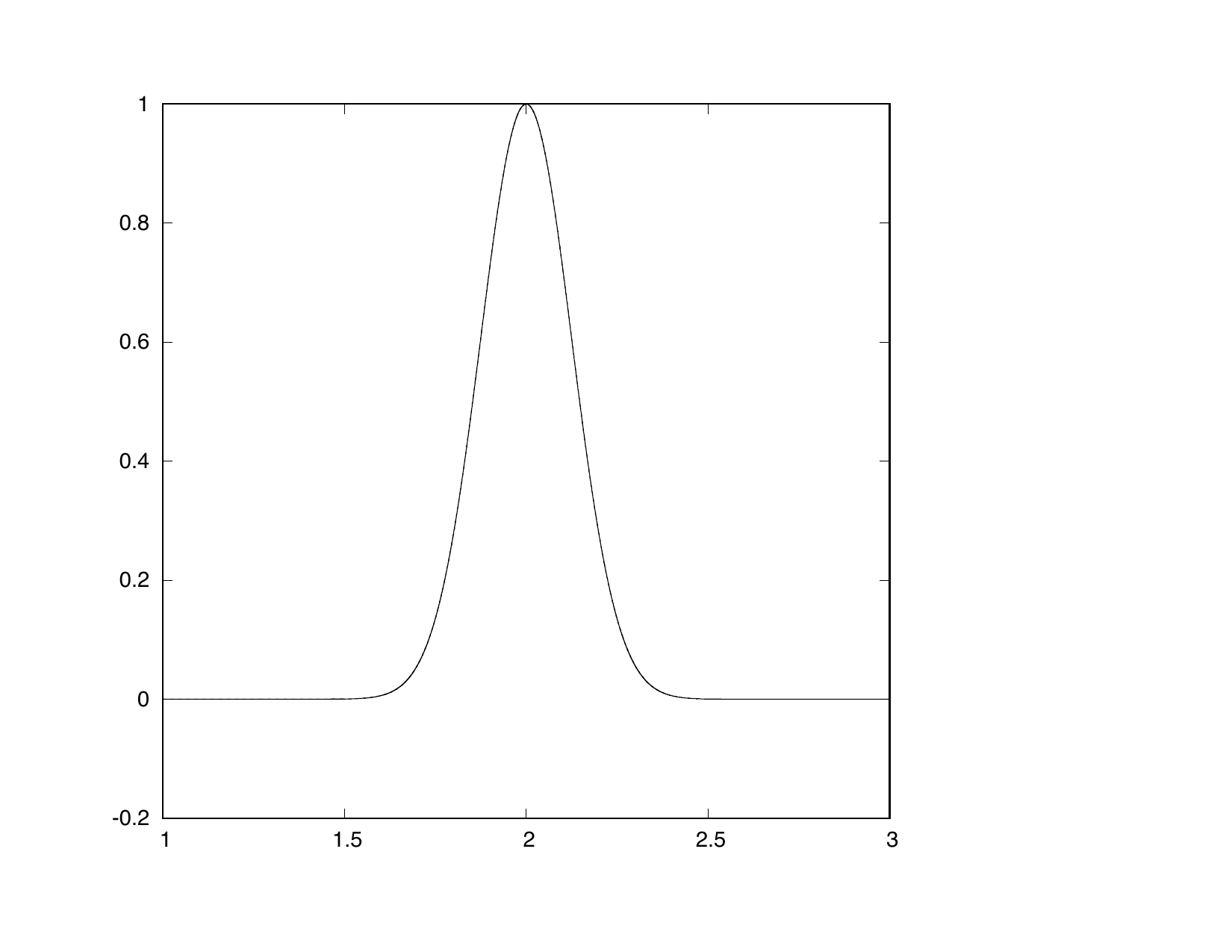}
        \caption{The exact and recovered\newline potential}
        \label{fig:belm_rec}
    \end{subfigure}
    \begin{subfigure}[b]{0.45\textwidth}
        \includegraphics[width=\textwidth]{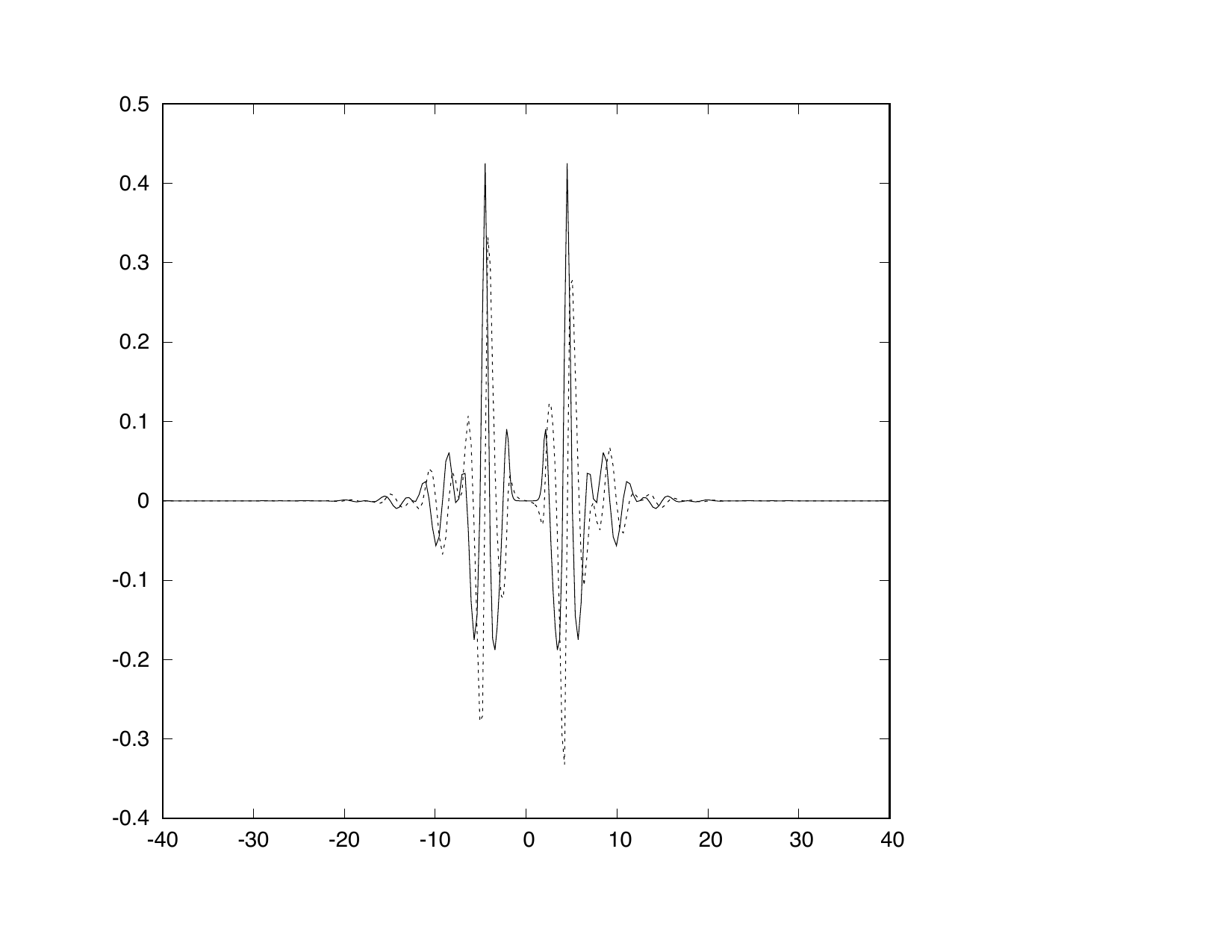}
        \caption{The initial data \newline $\phi_4 - (\sqrt{r} H_4(kr))'/(H_0(kr))$}
        \label{fig:belm_dat}
    \end{subfigure}
        \begin{subfigure}[b]{0.45\textwidth}
        \includegraphics[width=\textwidth]{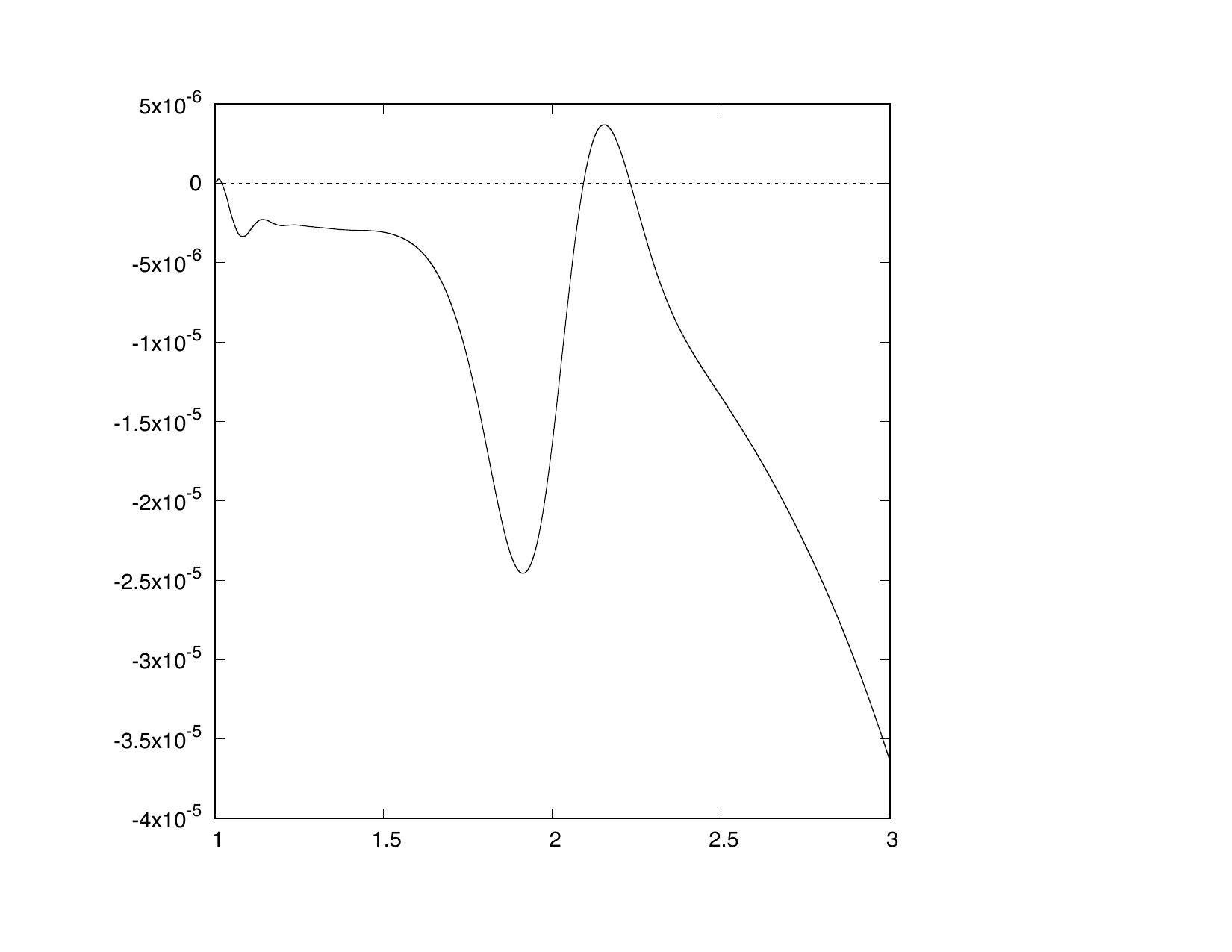}
        \caption{The recovery error}
        \label{fig:belm_err}
    \end{subfigure}
    \caption{Numerical results for a Gaussian bump $q(r) = e^{-32(x-2)^2}$ with $n=4.$ The time to generate the data was 37 seconds, and the time to solve was 88 seconds. The solve was done using $470$ frequencies in the range $[-240,240],$ and a spatial step size of $1/40 000.$}\label{fig:highm_recovery}
\end{figure}

\begin{figure}[p]
    \centering
    \begin{subfigure}[b]{0.45\textwidth}
        \includegraphics[width=\textwidth]{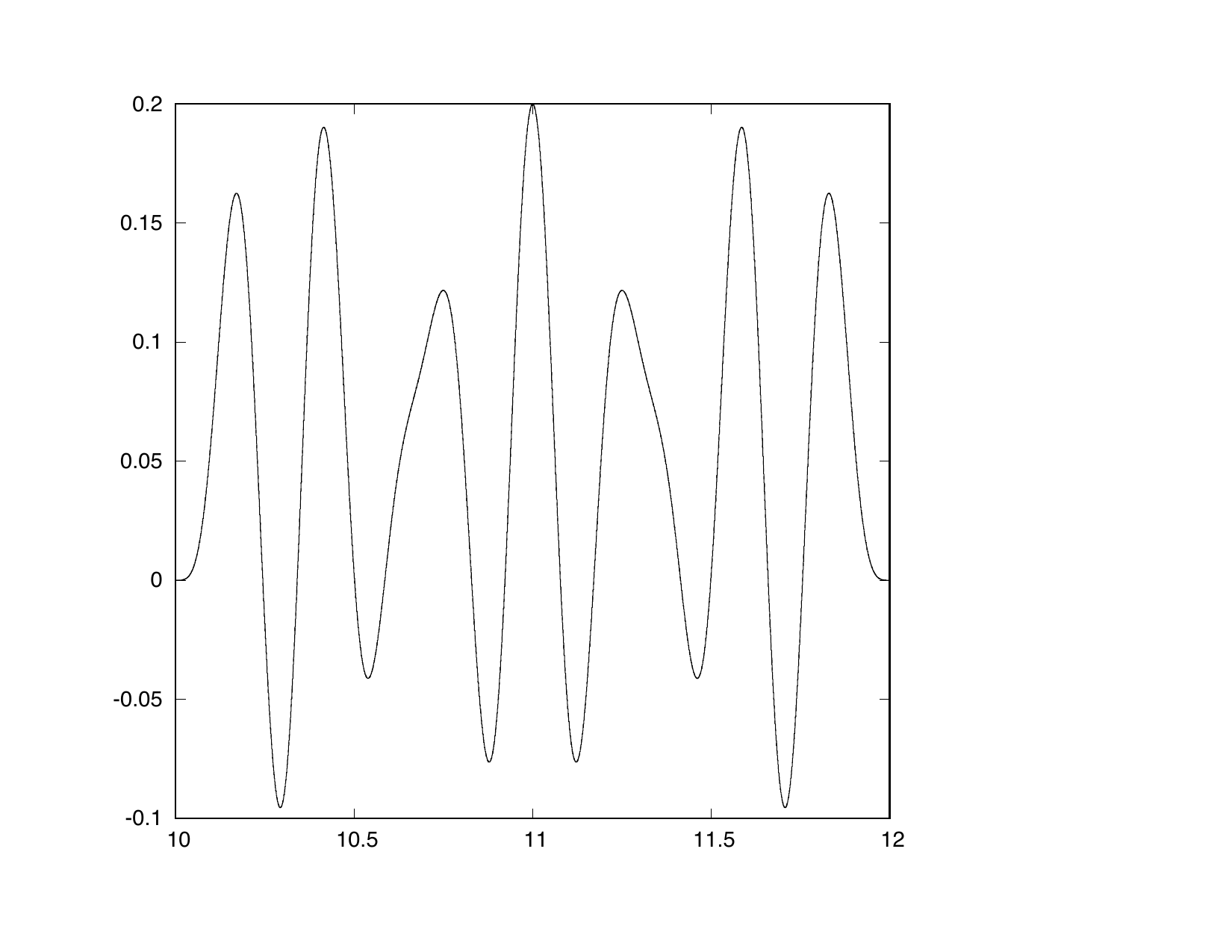}
        \caption{The exact and recovered\newline potential}
        \label{fig:sinf_rec}
    \end{subfigure}
    \begin{subfigure}[b]{0.45\textwidth}
        \includegraphics[width=\textwidth]{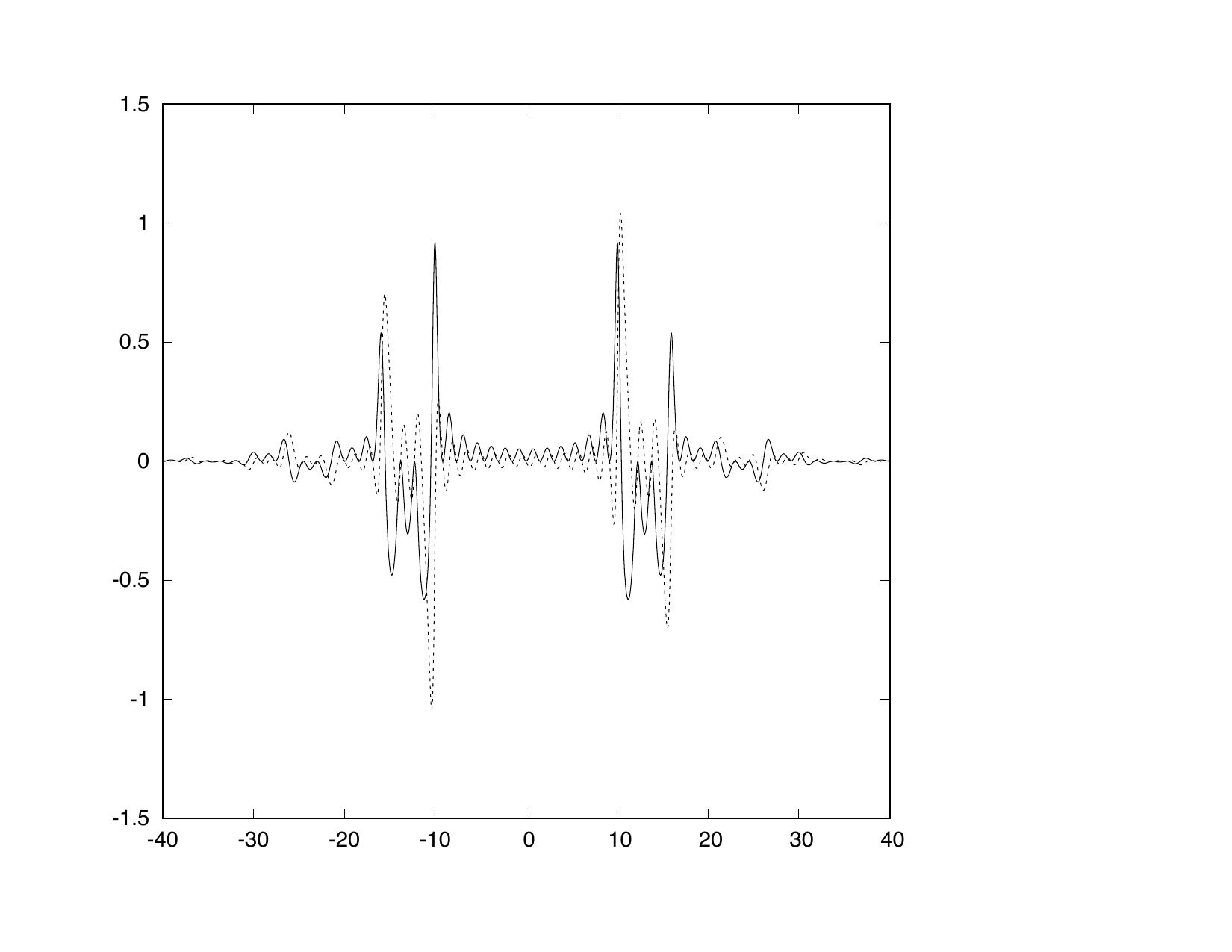}
        \caption{The initial data \newline $\phi_0 - (\sqrt{r} H_0(kr))'/(H_0(kr))$}
        \label{fig:sinf_dat}
    \end{subfigure}
        \begin{subfigure}[b]{0.45\textwidth}
        \includegraphics[width=\textwidth]{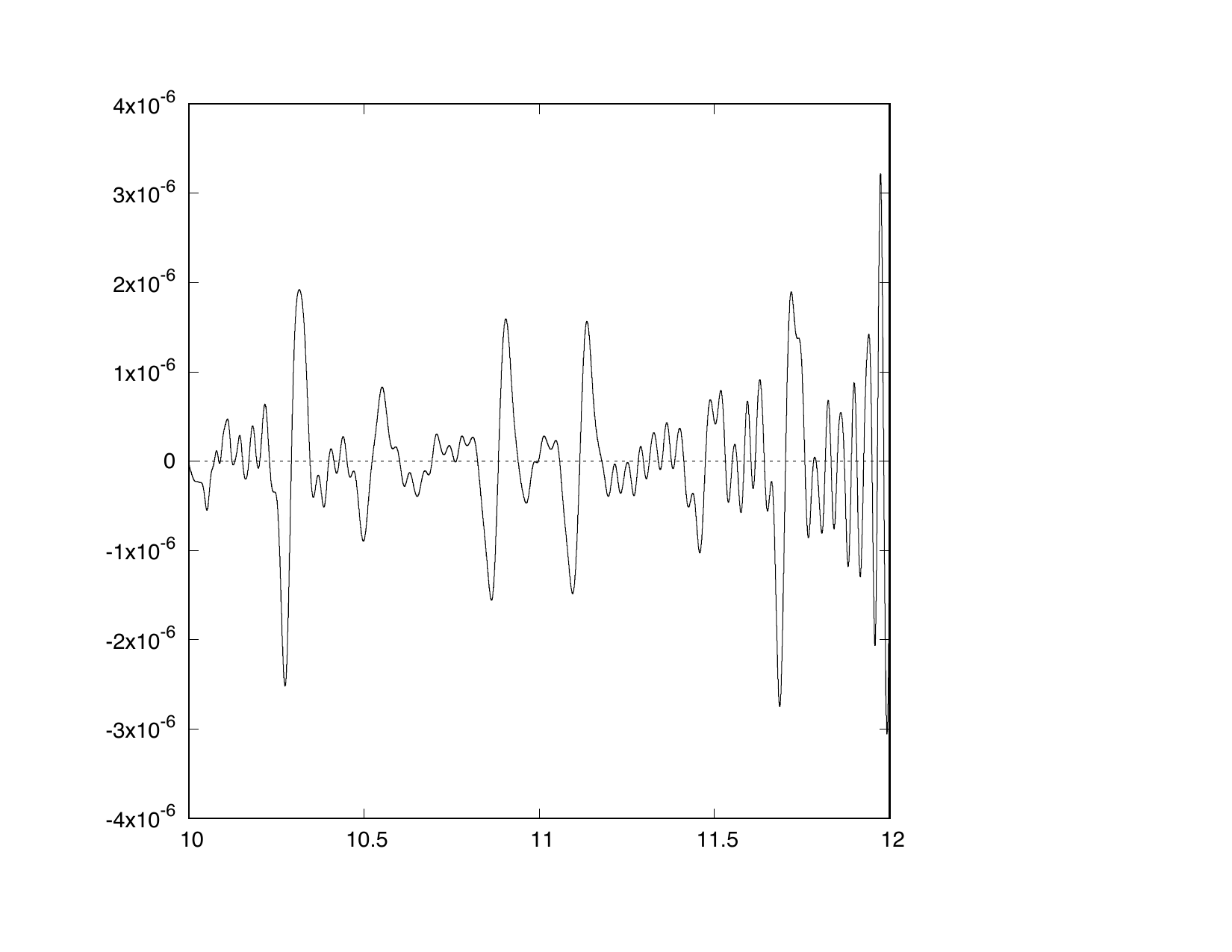}
        \caption{The recovery error}
        \label{fig:sinf_err}
    \end{subfigure}
    \caption{Numerical results for the potential\\
   $q(r) = \frac{1}{10} \left[ \cos(a(r-2)\pi)+1\right]-\frac{a^2}{10 b^2} \left[1- \cos(b(r-2)\pi) \right] $ with $a=9,b=10$ and $n=0.$ The time to generate the data was 62 seconds, and the time to solve was 130 seconds. The solve was done using $270$ frequencies in the range $[-160,160]$ and a spatial step size of $1/20 000.$}\label{fig:far_recovery}
\end{figure}

\begin{figure}[p]
    \centering
    \begin{subfigure}[b]{0.45\textwidth}
        \includegraphics[width=\textwidth]{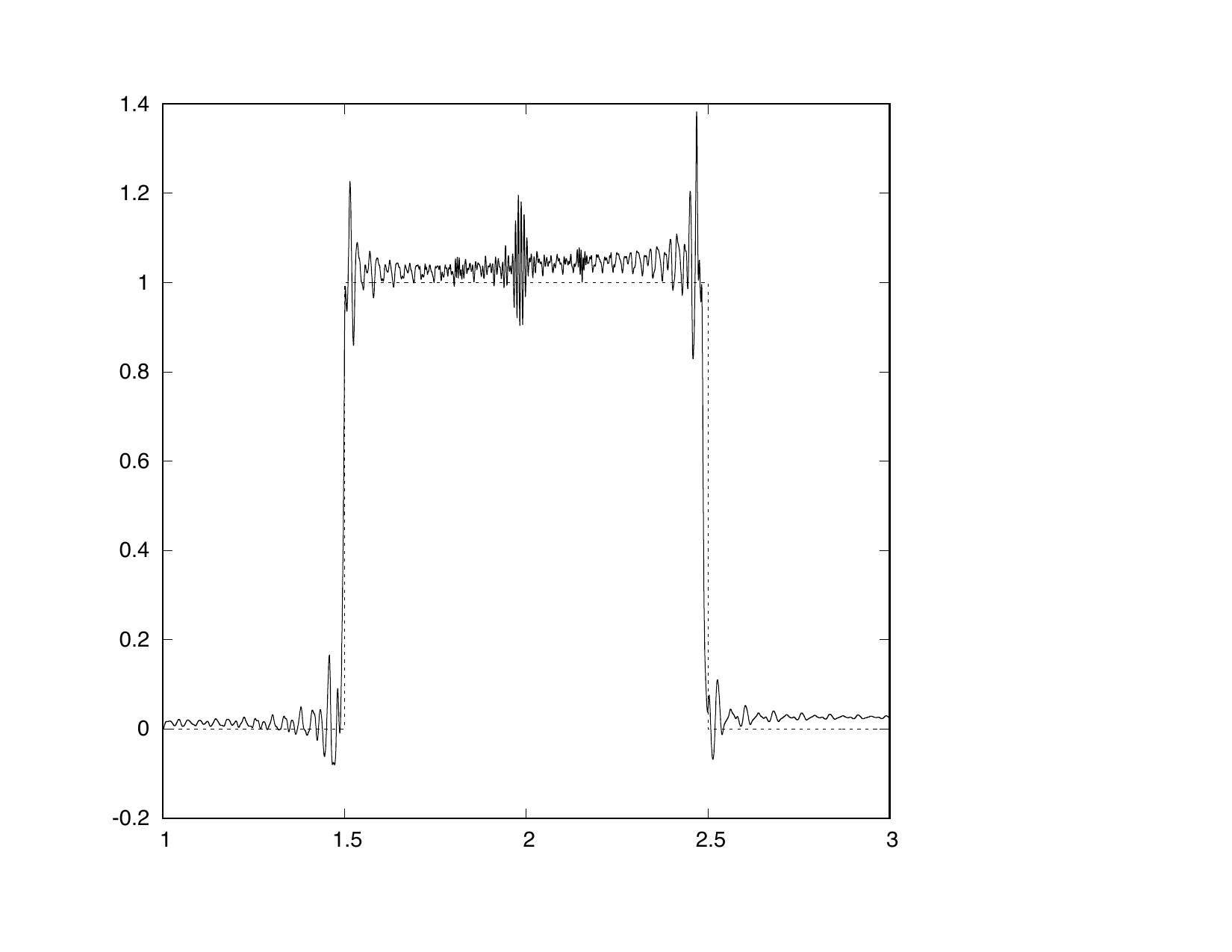}
        \caption{The exact (solid line) and\\ recovered (dashed line) potential}
        \label{fig:sqr_rec}
    \end{subfigure}
    \begin{subfigure}[b]{0.45\textwidth}
        \includegraphics[width=\textwidth]{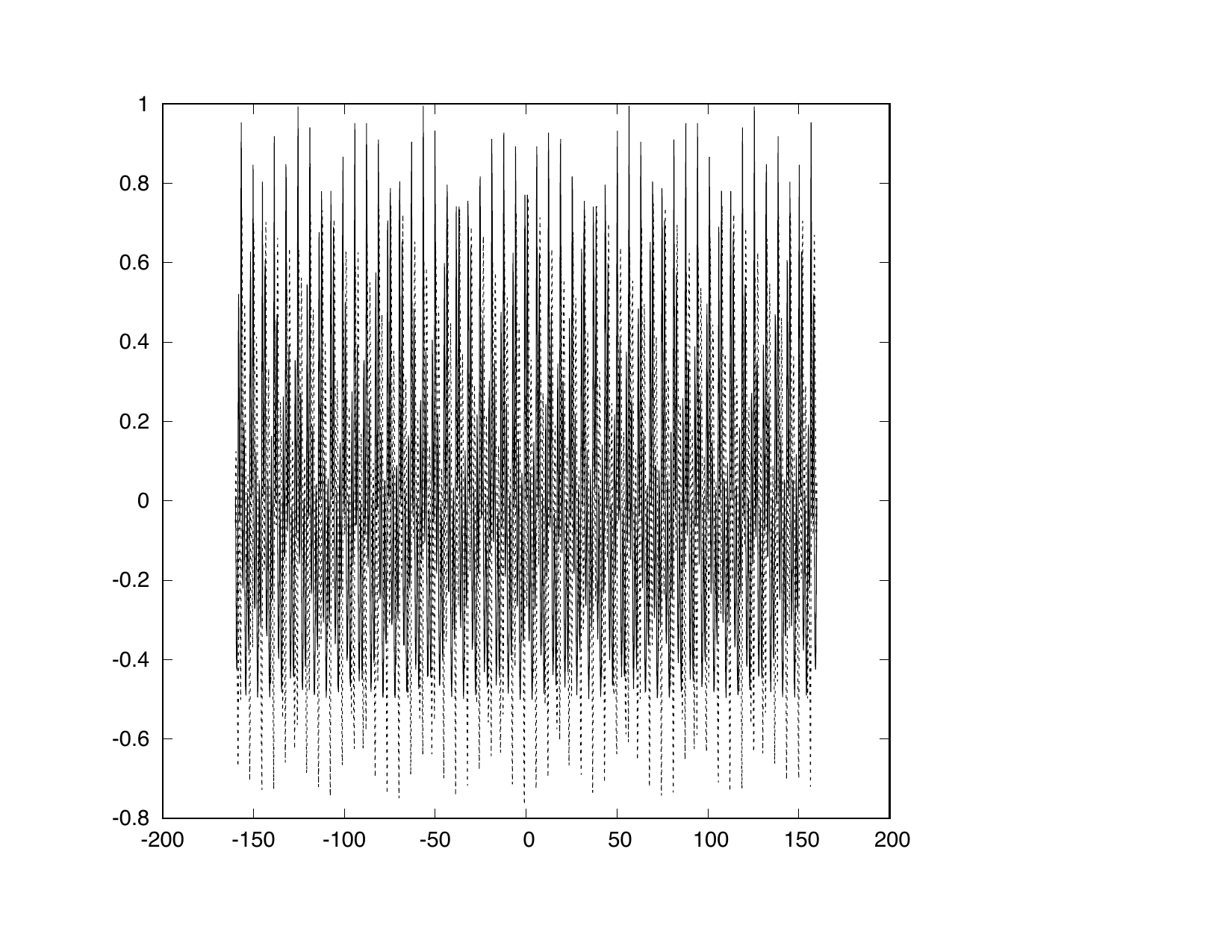}
        \caption{The initial data \newline $\phi_0 - (\sqrt{r} H_0(kr))'/(H_0(kr))$}
        \label{fig:sqr_dat}
    \end{subfigure}
    \caption{Numerical results for the potential $q(r)$ which is identically one on the interval $[1.5,2.5]$ and zero otherwise. Here $n=0.$ The time to generate the data was 65 seconds, and the time to solve was 130 seconds. The solve was done using frequencies in the range $[-160,160]$ and a spatial step size of $1/20 000.$}\label{fig:square_pot}
\end{figure}


\begin{thebibliography}{10}
\bibliographystyle{siam}
\bibitem{abram}
{\sc M.~Abramowitz and I.~A. Stegun}, eds., {\em {H}andbook of {M}athematical
  {F}unctions: with {F}ormulas, {G}raphs, and {M}athematical {T}ables},
  {N}ational {B}ureau of {S}tandards, 1964.

\bibitem{lgc4}
{\sc G.~Bao, S.~Hou, and P.~Li}, {\em Inverse scattering by a continuation
  method with initial guesses from a direct imaging algorithm}, J. Comput.
  Phys., 227 (2007), pp.~755--762.

\bibitem{lgc5}
{\sc G.~Bao and P.~Li}, {\em Inverse medium scattering for the helmholtz
  equation at fixed frequency}, Inverse Problems, 21 (2005), pp.~1621--1641.

\bibitem{lgc11}
{\sc G.~Bao and F.~Triki}, {\em Error estimates for the recursive linearization
  of inverse medium problems}, J. Comput. Math, 28 (2010), pp.~725--744.

\bibitem{lgc10}
\leavevmode\vrule height 2pt depth -1.6pt width 23pt, {\em Inverse scattering
  problems with multi-frequencies}, Inverse Problems, 31 (2015).

\bibitem{les1}
{\sc C.~Borges, A.~Gillman, and L.~Greengard}, {\em High {R}esolution {I}nverse
  {S}cattering in {T}wo {D}imensions {U}sing {R}ecursive {L}inearization}, Siam
  J. Imaging Sci., 10 (2016), pp.~641--664.

\bibitem{gaussquad}
{\sc J.~Bremer, Z.~Gimbutas, and V.~Rokhlin}, {\em A {N}onlinear {O}ptimization
  {P}rocedure for {G}eneralized {G}aussian {Q}uadratures}, SIAM J. Sci.
  Comput., 32 (2010), pp.~1761--1788.

\bibitem{chad1}
{\sc K.~Chadan and P.~Sabatier}, {\em Inverse problems in quantum scattering
  theory}, Springer-Verlag, 1977.

\bibitem{yuchen}
{\sc Y.~Chen and V.~Rokhlin}, {\em On the {I}nverse {S}cattering {P}roblem for
  the {H}elmholtz {E}quation in {O}ne {D}imension}, Inverse Problems, 8 (1992).

\bibitem{codd}
{\sc E.~Coddington and N.~Levinson}, {\em Theory of ordinary differential
  equations}, McGraw Hill, New York, 1955.

\bibitem{lgc31}
{\sc D.~Colton and A.~Kirsch}, {\em An approximation problem in inverse
  scattering theory}, Appl. Anal., 41 (1991), pp.~23--32.

\bibitem{lgc35}
{\sc D.~Colton and P.~Monk}, {\em The inverse scattering problem for
  time-harmonic acoustic waves in an inhomogeneous medium}, Quart. J. Mech.
  Appl. Math., 41 (1988), pp.~97--125.

\bibitem{crutch1}
{\sc W.~Y. Crutchfield}, {\em Class of exact inversion solutions to vibrating
  string problems}, Physics {L}etters, 5 (1983), pp.~233--236.

\bibitem{deift1}
{\sc P.~Deift and E.~Trubowitz}, {\em Inverse scattering on the line}, Comm.
  Pure Appl. Math, 32 (1979), pp.~121--251.

\bibitem{fedor}
{\sc M.~V. Fedoryuk}, {\em Asymptotic {A}nalysis}, Springer-Verlag, 1993.

\bibitem{gelf}
{\sc I.~Gel'fand and B.~M. Levitan}, {\em On the determination of a
  differential equation by its spectral function}, Dokl. Akad. USSR, 77 (1951),
  pp.~557--560.

\bibitem{lgc44}
{\sc S.~Gutman and M.~Klibanov}, {\em Regularized quasi-newton method for
  inverse scattering problems}, Math. Comput. Modelling, 18 (1993), pp.~5--31.

\bibitem{lgc45}
\leavevmode\vrule height 2pt depth -1.6pt width 23pt, {\em Two versions of
  quasi-newton method for multi-dimensional inverse scattering problems at
  fixed frequencies}, J. Comput. Acoust., 1 (1993), pp.~197--228.

\bibitem{lgc46}
\leavevmode\vrule height 2pt depth -1.6pt width 23pt, {\em Iterative methods
  for multi-dimensional inverse scattering problems at fixed frequencies},
  Inverse Problems, 10 (1994), p.~573.

\bibitem{lgc49}
{\sc T.~Hohage}, {\em On the numerical solution of a three-dimensional inverse
  medium scattering problem}, Inverse Problems, 17 (2001), pp.~1743--1763.

\bibitem{lgc51}
{\sc M.~Ikehata}, {\em Reconstruction of an obstacle from the scattering
  amplitude at a fixed frequency}, Inverse Problems, 14 (1998), pp.~949--954.

\bibitem{endpoint}
{\sc S.~Kapur and V.~Rokhlin}, {\em High-order corrected trapezoidal quadrature
  rules for singular functions}, SIAM J. Numer. Anal., 34 (1997),
  pp.~1331--1356.

\bibitem{lgc58}
{\sc R.~E. Kleinman and P.~M. van~den Berg}, {\em A modified gradient method
  for two-dimensional problems in tomography}, J. Comput. Appl. Math., 42
  (1992), pp.~17--35.

\bibitem{lgc59}
\leavevmode\vrule height 2pt depth -1.6pt width 23pt, {\em An extended
  range-modified gradient technique for profile inversion}, Radio Sci., 28
  (1993), pp.~877--884.

\bibitem{lines1}
{\sc L.~Lines and S.~Treitel}, {\em A review of least-squares inversion and its
  applications to geophysical problems}, Geophysical {P}rospecting, 32 (1984),
  pp.~159--186.

\bibitem{perov}
{\sc D.~S. Mitrinovi{\'{c}}, J.~E. Pe{\v{c}}ari{\'{c}}, and A.~M. Fink}, {\em
  Inequalities {I}nvolving {F}unctions and {T}heir {I}ntegrals and
  {D}erivatives}, Springer {N}etherlands, 1991.

\bibitem{pan1}
{\sc G.~Pan and R.~A. Phinney}, {\em Full-waveform inversion of plane-wave
  seismogram in stratified acoustic media: {A}pplications and limitations},
  Geophysics, 54 (1989), pp.~568--580.

\bibitem{lgc68}
{\sc R.~Potthast}, {\em A point source method for inverse acoustic and
  electromagnetic obstacle scattering prob- lems}, IMA J. Appl. Math., 61
  (1998), pp.~119--140.

\bibitem{stark}
{\sc H.~Stark}, ed., {\em Image {R}ecovery: {T}heory and {A}pplication},
  Academic Press, Inc., 1987.

\bibitem{stick1}
{\sc D.~Stickler}, {\em Application of the trace formula methods to inverse
  scattering for some geophysical problems}, Inverse Problems (SIAM AMS
  Proceedings 14),  (1984), pp.~13--30.

\bibitem{sylv1}
{\sc J.~Sylvester}, {\em A {C}onvergent {L}ayer {S}tripping {A}lgorithm for the
  {R}adially {S}ymmetric {I}mpedance {T}omagraphy {P}roblem}, Communications in
  Partial Differential Equations, 17 (1992), pp.~1955--1994.

\bibitem{symes1}
{\sc W.~W. Symes and J.~J. Carazzone}, {\em Velocity inversion by differential
  semblance optimization}, Geophysics, 56 (1989), pp.~654--663.

\end{thebibliography}
\end{document}